\theoremstyle{plain}
\newtheorem{thm}{Theorem}[section]
\newtheorem{cor}[thm]{Corollary}
\newtheorem{lem}[thm]{Lemma}
\newtheorem{prop}[thm]{Proposition}
\newtheorem{conj}[thm]{Conjecture}
\newtheorem*{conj*}{Conjecture}
\theoremstyle{definition}
\newtheorem{defn}[thm]{Definition}
\newtheorem{ex}[thm]{Example}
\newtheorem{rem}[thm]{Remark}
\theoremstyle{remark}
\def\cocoa{{\hbox{\rm C\kern-.13em o\kern-.07em C\kern-.13em o\kern-.15em A}}}
\def\implies{\ifmmode\Rightarrow \else
        \unskip${}\Rightarrow{}$\ignorespaces\fi}
\def\supp{\mathrm{supp}}
\def\gcd{{\rm gcd}}
\let\sect=\cap
\def\xb{{\mathbf{x}}}
\def\eb{{\mathbf{e}}}
\def\C{\mathcal{C}}
\begin{document}

\title[Decomposable clutters \& Simon's conjecture]{Decomposable clutters and a generalization of Simon's conjecture}
\author[M. Bigdeli, A. A. Yazdan Pour, R. Zaare-Nahandi]{mina bigdeli, {ali akbar} {yazdan pour}, rashid zaare-nahandi}
\address{Mina Bigdeli, School of Mathematics\\ Institute for Research in Fundamental Sciences (IPM)\\  P.O.Box: 19395-5746\\ Tehran, Iran}
		\email{mina.bigdeli98@gmail.com, mina.bigdeli@ipm.ir}
\address{Ali Akbar Yazdan Pour,  Department of mathematics\\ institute for advanced studies in basic sciences (IASBS)\\ P.O.Box 45195-1159 \\ Zanjan, Iran}
\email{yazdan@iasbs.ac.ir}
\address{Rashid Zaare-Nahandi,  Department of mathematics\\ institute for advanced studies in basic sciences (IASBS)\\ P.O.Box 45195-1159 \\ Zanjan, Iran}
\email{rashidzn@iasbs.ac.ir}
\thanks{Bigdeli's research was supported by a grant from IPM}
\subjclass[2010]{Primary 13D02, 13F55; Secondary 05E45, 05C65.}
\keywords{Chordal clutter, Decomposable clutter, Linear resolution, Linear quotients,   Shellable simplicial complex}

\begin{abstract}
Each (equigenerated) squarefree monomial ideal in the polynomial ring $S=\mathbb{K}[x_1, \ldots, x_n]$ represents a family of subsets of $[n]$, called a (uniform) clutter. In this paper, we introduce a class of uniform clutters, called decomposable clutters, whose associated ideal has linear quotients and hence linear resolution over all fields. We  show that chordality of these clutters guarantees the correctness of a conjecture raised by R.~S.~Simon \cite{Simon} on extendable shellability of $d$-skeletons of a simplex $\langle [n] \rangle$, for all $d$. We then prove this conjecture for $d \geq n-3$.  
\end{abstract}
\maketitle


\section*{introduction}
In the study of simplicial complexes, shellability is one of the interesting and widely considered topics. However, it is not easy to determine whether a simplicial complex is shellable. There are some known classes of such complexes arising from  different structures. In this paper, we introduce a new class of shellable complexes which arise from decomposable clutters. 

Shellability is a simple and powerful combinatorial tool for obtaining sequentially Cohen-Macaulay property. Moreover, shellability is one of the most important tools for polytopes to satisfy  Euler-Poincar\'e formula. Recall that the Euler-Poincar\'e  formula states that for a $d$-dimensional polytope $P$, one has
$$\sum\limits_{i=-1}^d (-1)^i f_i=0,$$
where $f_i$ denotes the number of $i$-faces of $P$ (with $f_{-1}=f_d=1$).
For a historical review of the importance and motivation behind the notion of shellability, the reader may refer to state of the art paper by J. Gallier \cite[pp. 111-112]{Gallier}.

In \cite{Xavier} it is proved that for every $d \geq 2$, deciding if a pure $d$-dimensional simplicial complex is shellable is NP-hard, hence NP-complete. So it is of great interest to find some classes of simplicial complexes which are shellable. Some known results in this area are as follows:
\begin{itemize}
\item Every skeleton of a shellable simplicial complex is shellable \cite[Theorem 2.9]{Bjorner-Wachs}. In particular, every skeleton of a simplex is shellable. 
\item Vertex decomposable simplicial complexes are shellable (essentially \cite{Wachs2}). In particular, matroid complexes are shellable \cite{Provan}.
\item If $P$ is a polytope, then the boundary complex of $P$ is shellable \cite{Brugesser}.
\item If $G$ is a chordal graph, then the independence complex of $G$ is shellable \cite[Theorem 2.13]{Vantuyl}.
\item If $G$ is a chordal graph, and $\Delta=\Delta(G)$ is the clique complex of $G$, then the Stanley--Reisner ideal $I_\Delta$ has a linear resolution \cite{Fr}. So by \cite[Theorem 3.2]{HHZ} the ideal $I_\Delta$ has linear quotients. Hence the Alexander dual of $\Delta$ is shellable \cite[Theorem 1.4(c)]{HHZ}.
\end{itemize}
One of the main results of this paper concerns a generalization of chordal graphs to hypergraphs, called decomposable clutters, with the property that the Alexander dual of the clique complex is shellable (Corollary~\ref{shellability of simp. of decomposables}).

Another important class of shellable simplicial complexes is  the class of extendably shellable simplicial complexes. A simplicial complex $\Delta$ is called \textit{extendably shellable}, if any shelling of a subcomplex of $\Delta$ can be continued to be a shelling of $\Delta$. By a subcomplex of $\Delta$, here we mean a simplicial complex $\Gamma$ whose facets are facets of $\Delta$.
As in the case of shellable simplicial complexes, it seems to be quite difficult to show whether a special class of complexes is extendably shellable. It is known that any 2-sphere is extendably shellable \cite[p. 37]{Klee}. H. Tverberg has asked whether, for $d \geq 3$, each convex $d$-sphere is extendably shellable (see \cite{Klee2}).  Later, Ziegler \cite{Ziegler} showed that there are simple and simplicial polytopes whose boundary complex is not extendably shellable.  
This fact  gave a negative answer to the question of Tverberg.  However, in \cite{Kleinschmidt}, it is shown that each $d$-sphere with $d + 3$ vertices is extendably shellable (see \cite[p. 49]{Klee}). An intriguing conjecture due to R.~S.~Simon~\cite[Conjecture 4.2.1]{Simon} is the following:

\begin{conj*}[{Simon's Conjecture}] 
Every $d$-skeleton of a simplex is extendably shellable.
\end{conj*}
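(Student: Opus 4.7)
The plan is to attack Simon's conjecture through the framework of decomposable clutters developed in this paper, reducing extendable shellability of $\Delta^{(d)}$ (the $d$-skeleton of $\langle[n]\rangle$) to a combinatorial assertion about greedy selection of facets. Given a partial shelling $F_1,\dots,F_k$ of a subcomplex $\Gamma$ whose facet set is some $\mathcal{S}\subseteq\binom{[n]}{d+1}$, the task is to produce $F_{k+1}\in\binom{[n]}{d+1}\setminus\mathcal{S}$ such that $F_1,\dots,F_{k+1}$ remains a shelling, and to iterate until $\binom{[n]}{d+1}$ is exhausted. Via the Herzog--Hibi--Zheng correspondence \cite{HHZ}, this is equivalent to exhibiting a linear-quotients ordering on the minimal generators of the Alexander dual of the complement clutter $\C'=\binom{[n]}{d+1}\setminus\mathcal{S}$ whose initial segment is dictated by $F_1,\dots,F_k$.

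The next step applies this paper's main machinery: the aim is to show that every complement clutter $\C'$ arising in this way is \emph{decomposable} in the sense of this paper. Decomposability supplies linear quotients for the associated ideal (over every field), hence a shelling of the Alexander dual by the corollary asserting shellability of simplicial complexes of decomposable clutters. A careful bookkeeping of the shelling-restriction sets should further show that the shelling produced by decomposability can be made to begin with the initial segment corresponding to $F_1,\dots,F_k$, which yields the desired extension and closes the greedy loop.

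The decomposability of $\C'$ would be proved by induction on $n-d$, with the cases $d\geq n-3$ handled in this paper serving as the base. At the inductive step, the goal is to exhibit a \emph{simplicial vertex} in the clutter-chordality sense: a vertex $v\in[n]$ such that both the deletion $\C'\setminus v$ and the appropriate link of $v$ in $\C'$ remain decomposable, so that the inductive hypothesis applies on smaller ground sets and smaller codegree. A natural candidate is a vertex minimizing a suitably weighted degree in $\C'$, selected via a symmetry/averaging argument over the action of the symmetric group $\mathfrak{S}_n$ on $[n]$, combined with the observation that the complete $(d+1)$-uniform clutter is itself vertex-transitive.

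The main obstacle is precisely this inductive step. Complement clutters $\C'$ arising from arbitrary partial shellings can be highly asymmetric, destroying any naive symmetry argument, and the chordality condition of the paper may fail at every vertex of $\C'$ for reasons tied to the specific shape of the initial $F_1,\dots,F_k$. A promising refinement is to associate to each vertex $v$ a ``deficiency'' measuring the gap between the partial shelling and a full star of $v$, and to argue by a double counting over pairs (vertex, restriction set) that some $v$ must have small enough deficiency to qualify as simplicial. Whether such a refinement suffices for all $d<n-3$ is the unresolved combinatorial heart of Simon's conjecture, and it is the single step on which any full proof along these lines ultimately stands or falls.
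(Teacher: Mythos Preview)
The statement is an open conjecture that the paper does not prove; it establishes only the cases $d\ge n-3$ (Corollary~\ref{partial answer to Simon's conjecture}) and formulates a strengthening (Conjecture~\ref{decomposables are chordal-conjecture}) whose truth would imply Simon's conjecture in full. Any complete argument here would be a new theorem, so the relevant question is whether your outline closes a gap the paper leaves open.

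It does not, and the reason is a conflation of two distinct notions from the paper. You propose to show that the clutter $\C'=\binom{[n]}{d+1}\setminus\mathcal S$ of remaining facets is \emph{decomposable}, and then to invoke Theorem~\ref{stronglinq} to obtain linear quotients and hence a shelling. But Theorem~\ref{stronglinq} yields linear quotients for $I(\bar{\C'})=(\xb_{F_1},\dots,\xb_{F_k})$, and via Proposition~\ref{linear quotient and shellability} this shells $\Delta(\C')^\vee=\langle \bar F_1,\dots,\bar F_k\rangle$, which is \emph{not} the $d$-skeleton and carries no information about extending the order $F_1,\dots,F_k$ inside $\langle[n]\rangle^{(d)}$. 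The correct dictionary, worked out in Proposition~\ref{Simon implies that complete clutter is chordal}, passes to the complementary uniformity: the partial shelling corresponds to a simplicial sequence $\mathbf e=e_1,\dots,e_k$ with $e_i=[n]\setminus F_i$ in $\mathcal{C}_{n,\,n-d}$, and extending the shelling by one facet is exactly finding a simplicial maximal subcircuit of the residual clutter $(\mathcal{C}_{n,\,n-d})_{\mathbf e}^k$, i.e.\ proving that this clutter is \emph{chordal}. That residual clutter is already decomposable for free---it is a simplicial subclutter of a complete clutter, so rule~(iii) of the definition applies with no work. Decomposability is therefore not the obstruction; the missing implication is precisely ``decomposable $\Rightarrow$ chordal'', which is the paper's open Conjecture~\ref{decomposables are chordal-conjecture}.

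Your later paragraphs, which hunt for a ``simplicial vertex'' by averaging and double counting, are in effect an attempt to prove chordality rather than decomposability; once reframed this way your plan coincides with the paper's own reduction (Corollary~\ref{Inverse of Simon's conjecture}). You rightly flag in your final paragraph that this inductive step is unresolved, but the symmetry heuristic you sketch has no force once one passes from the vertex-transitive $\mathcal{C}_{n,\,n-d}$ to an arbitrary simplicial subclutter of it, and no argument is given for why a vertex of small ``deficiency'' should produce a simplicial maximal subcircuit. As it stands, the proposal is a restatement of the paper's conjectural programme rather than progress on it.
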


Bj\"orner and Eriksson in \cite{Bjorner} have proved that any matroid of rank $3$ is extendably shellable. Since $2$-skeleton of a simplex is a matroid of rank $3$,  Simon's conjecture holds for $d=2$. In \cite[Remark 1]{Bjorner}, as a natural strenthening of Simon's conjecture, the authors asked if all matroidal simplicial complexes are extendably shellable. Hall in \cite{Hall} presented a matroid of rank $12$ which is not extendably shellable. So this is a counterexample to the extended conjecture of Bj\"orner and Eriksson.

As one of the main results of this paper, in Corollary~\ref{partial answer to Simon's conjecture}, we will show that the $d$-th skeleton   of the simplex $\langle [n] \rangle$  is extendably shellable for $d \geq n-3$. Moreover, we make a stronger conjecture which is a generalization of Simon's conjecture, (see Conjecture~\ref{decomposables are chordal-conjecture}). Our approach to get a partial answer to Simon's conjecture is as follows:

In Section~\ref{prelim} we introduce algebraic and combinatorial backgrounds which will be used in this paper. In Section~\ref{decomposable}, we introduce a generalization of chordality from graphs to hypergraphs, which is called decomposability of clutters. In this section it is proved that the Stanley-Reisner ideal of the clique complex of decomposable clutters have linear quotients. Hence the Alexander dual of the clique complex of a decomposable clutter is shellable. Then in Section~\ref{Simon}, we study the relation between the concept of decomposable clutter and the Simon's conjecture. To be more precise, we consider the class of chordal clutters as introduced in \cite{BYZ}. The ideal associated to chordal clutters have a linear resolution over all fields, while there are examples of chordal clutters whose  associated ideal does not have linear quotients. Yet, the ideal attached to the class of decomposable clutters has linear quotients. It follows that the class of chordal clutters is different from the class of decomposable clutters. However, since the ideals associated to the class of decomposable clutters have a linear resolution over all fields, it is reasonable to ask whether this class is contained in the class of chordal clutters. We will see that this statement is a generalization of Simon's conjecture  (Corollary~\ref{Inverse of Simon's conjecture}).  We close the paper by giving some examples of classes of decomposable clutters in the last section.


\section{Preliminaries} \label{prelim}
Throughout this paper, $S=\mathbb{K}[x_1, \ldots, x_n]$ denotes the polynomial ring over a field $\mathbb{K}$ with $n$ variables, endowed with standard grading (i.e. $\deg(x_i) =1$). Let $I \neq 0$ be a  graded ideal of $S$ and
$$
 \cdots \to F_2 \to F_1 \to F_0 \to I \to 0,
$$
be a graded minimal free resolution of $I$ with $F_i = \oplus_j S(-j)^{\beta_{i,j}(I)}$, for all $i \geq 0$.

The numbers $\beta_{i,j}(I) = \dim_{\mathbb{K}} \mbox{Tor}^S_i(I, \mathbb{K})_j$ are called the \textit{graded Betti numbers} of $I$. The \textit{Castelnuovo-Mumford regularity} of $I$, $\mathrm{reg}(I)$, is given by
$$
\mbox{reg}(I) = \sup\{j - i \colon \quad \beta_{i,j}(I) \neq 0\}.
$$

We say that $I$ has a $d$-\textit{linear resolution} if $\beta_{i,j}(I) =0$ for all $i, j$ with $j-i>d$. If this is the case, then  $I$ is generated by homogeneous elements of degree $d$. In this paper we focus on non-zero homogeneous ideals that can be generated by squarefree monomials. Such ideals are called \textit{squarefree monomial ideals}.

We denote by $\mathcal{G}\left(I\right)$, the set of minimal generating set of a monomial ideal $I\subset S$. For two ideals $I, J \subset S$ the set $I \colon J=\{ f\in S \colon \; fg\in I \text{\ for all } g\in J\}$ is an ideal in $S$, called the {\em colon ideal} of $I$ with respect to $J$. The following is an easy consequence of the properties of monomial ideals in $S$.

\begin{prop}[{\cite[Proposition~1.2.2]{HHBook}}]\label{colon ideal}
Let $I$ and $J$ be monomial ideals. Then $I \colon J$ is a monomial ideal, and	
	$$I \colon J=\bigcap_{v\in \mathcal{G}\left(J\right)}I \colon \left( v \right).$$
Moreover, $\{u/ \gcd\left(u,v\right) \colon \; u \in \mathcal{G}\left(I\right)\}$ is a set of generators of $I \colon \left( v \right)$.
\end{prop}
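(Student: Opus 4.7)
The plan is to handle the three assertions in order: that $I:J$ is monomial, that it equals the intersection $\bigcap_{v\in\mathcal{G}(J)} I:(v)$, and finally to identify a generating set of each single colon $I:(v)$.

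First I would reduce to a single generator. The identity $I:J = \bigcap_{v\in\mathcal{G}(J)} I:(v)$ is a purely formal manipulation. The inclusion $\subseteq$ is trivial since $(v)\subseteq J$ for every $v \in \mathcal{G}(J)$. For the reverse inclusion, any $g\in J$ can be written as $g = \sum_v h_v v$ with $v$ ranging over $\mathcal{G}(J)$, so if $fv\in I$ for every such $v$ then $fg = \sum_v h_v(fv) \in I$. Thus the whole problem reduces to understanding $I:(v)$ for a single monomial $v$.

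Next I would prove that $I:(v)$ is a monomial ideal and produce its generators simultaneously. The key tool is the standard fact that if $I$ is a monomial ideal and $f\in I$, then every monomial in the support of $f$ belongs to $I$. Given $f\in I:(v)$, write $f = \sum c_i m_i$ with distinct monomials $m_i$. Then $fv = \sum c_i (m_i v)$, the $m_iv$ are again distinct monomials, so each $m_i v \in I$, meaning each $m_i \in I:(v)$. This already shows $I:(v)$ is generated by monomials, and moreover its monomial members are exactly the monomials $m$ with $mv \in I$.

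The main (but still easy) step is then to show that such $m$ are precisely the multiples of $u/\gcd(u,v)$ for $u\in\mathcal{G}(I)$. On one hand, $(u/\gcd(u,v))\cdot v = u\cdot (v/\gcd(u,v))\in I$, so each $u/\gcd(u,v)$ does lie in $I:(v)$. Conversely, if $mv\in I$ there exists $u\in\mathcal{G}(I)$ dividing $mv$. Dividing through by $\gcd(u,v)$ yields $(u/\gcd(u,v)) \mid m\cdot (v/\gcd(u,v))$, and since $u/\gcd(u,v)$ and $v/\gcd(u,v)$ share no common variable, we conclude $u/\gcd(u,v) \mid m$. Combining the two inclusions gives the claimed generating set.

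I do not expect any serious obstacle here; the only point requiring minor care is the elementary lemma that monomial ideals are preserved under passage to monomial supports, which underlies both the monomial‑ness of $I:(v)$ and the divisibility arguments above. Everything else is a direct computation with monomials and gcds.
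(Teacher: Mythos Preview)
Your argument is correct and is essentially the standard proof (the one given in \cite{HHBook}); the paper itself does not supply a proof but merely cites the reference, so there is nothing further to compare. One tiny quibble: when you say $u/\gcd(u,v)$ and $v/\gcd(u,v)$ ``share no common variable,'' what you actually need (and what holds) is that their gcd is $1$, which indeed forces the divisibility $u/\gcd(u,v)\mid m$---your conclusion stands.
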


A homogeneous ideal $I$ is said to have \textit{linear quotients}, if $I$ has an ordered set of minimal generators $\left\{ u_1, \ldots, u_r \right\}$ such that the colon ideal $\left( u_1, \ldots, u_{i-1} \right) \colon u_i$ is generated by linear forms, for $i= 2, \ldots, r$. If $I$ is an equigenerated ideal with linear quotients, then $I$ has a linear resolution \cite[Proposition~8.2.1]{HHBook}.


\subsection{Simplicial complexes}
A \textit{simplicial complex} $\Delta$ on the vertex set $V=\{v_1, \ldots, v_n\}$ is a collection of subsets of $V$ such that $\{ v_i \} \in \Delta$  for all $i$ and, $F \in \Delta$ implies that all subsets of $F$ are also in $\Delta$. The elements of $\Delta$ are called
\textit{faces} and the maximal faces under inclusion are called \textit{facets} of $\Delta$. We denote by $\mathcal{F}(\Delta)$ the set of facets of $\Delta$. By $\langle F_1,\ldots, F_t\rangle$ we mean the simplicial complex whose facets are $F_1,\ldots, F_t$. A simplicial complex which has only one facet is called a \textit{simplex}.  
A subset $F\subseteq [n]$ is called a \textit{non-face} of $\Delta$ if $F\notin \Delta$.

 The {\em dimension} of a face $F$ is $\dim F = |F|-1$, where $|F|$ denotes the cardinality of $F$. A simplicial complex is called {\em pure} if all its facets have the same dimension. 
The \textit{dimension} of $\Delta$, $\dim(\Delta)$, is defined as:
$$\dim (\Delta) = \max\{\dim F \colon F \in \Delta \}.$$

For a simplicial complex $\Delta$ of dimension $d$ and for $0\leq i\leq d$, the {\em $i$-th  skeleton} of $\Delta$, denoted by $\Delta^{(i)}$, is a simplicial complex  whose faces are all faces of $\Delta$ with dimension$\leq i$. By {\em pure $i$-th skeleton} of $\Delta$ we mean a simplicial complex $\Delta^{[i]}$ whose facets are all $i$-faces of $\Delta$. For a simplex these two concepts coincide.

Simplicial complexes are in one-to-one correspondence with squarefree monomial ideals.  To each simplicial complex $\Delta$ on the vertex set $[n]$ we associate a squarefree monomial ideal $I_\Delta\subset S$, which is called the Stanley-Reisner ideal of $\Delta$, defined as follows:
$$I_\Delta=(\mathbf{x}_F \colon \ F\notin \Delta ),$$
where $\mathbf{x}_F=\prod_{i\in F}x_i$, for $F \subset [n]$.

\begin{defn}[Shellable simplicial complexes]
A simplicial complex $\Delta$ is called \textit{shellable} if there is a total order of the facets of  $\Delta$, say $F_1, \ldots, F_t$, such that $\langle F_1, \ldots, F_{i-1} \rangle \cap \langle F_i \rangle$ is generated by a non-empty set of maximal proper faces of $F_i$ for $2 \leq i \leq t$. Any such order is called a \textit{shelling order} of $\Delta$.
\end{defn}

For a simplicial complex $\Delta$ on the vertex set $[n]$ and for a facet $F \in \Delta$, let $\bar{F} = [n] \setminus F$ be the complement of $F$. 
The \textit{Alexander dual} of $\Delta$, denoted by $\Delta^\vee$, is  the simplicial complex
$$\Delta^\vee = \{ \bar{F} \colon \quad F \notin \Delta \}.$$
The facets of $\Delta^\vee$ are the complements of minimal non-faces of $\Delta$. Moreover, $(\Delta^\vee)^\vee=\Delta$. Hence, if $\mathcal{F}(\Delta)=\{F_1,\ldots,F_t\}$, then $I_{\Delta^{\vee}}=\left( \xb_{\bar{F}_1}, \ldots, \xb_{\bar{F}_r} \right)$.

The following result shows that the facets of a shellable simplicial complex induces an order of linear quotients for an appropriate ideal.
\begin{prop}[{\cite[Proposition~ 8.2.5]{HHBook}}]\label{linear quotient and shellability}
Let $\Delta$ be a shellable simplicial complex on the vertex set $[n]$. The followings are equivalent:
\begin{itemize}
\item[\rm (i)] $F_1, \ldots, F_r$ is a shelling order of $\Delta$;
\item[\rm (ii)] The ideal $I_{\Delta^{\vee}}=\left( \xb_{\bar{F}_1}, \ldots, \xb_{\bar{F}_r} \right)$ has linear quotients with respect to  the given order.
\end{itemize}
\end{prop}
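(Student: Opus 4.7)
The plan is to prove the equivalence by directly translating the shelling condition at step $i$ into a statement about the colon ideal $J_i := (\xb_{\bar F_1},\ldots,\xb_{\bar F_{i-1}}) : \xb_{\bar F_i}$, using Proposition~\ref{colon ideal}. The key dictionary: for any two facets $F, G$ of the same dimension, $\gcd(\xb_{\bar F}, \xb_{\bar G}) = \xb_{\bar F \cap \bar G}$, hence
\[
\frac{\xb_{\bar F_j}}{\gcd(\xb_{\bar F_j}, \xb_{\bar F_i})} \;=\; \xb_{\bar F_j \setminus \bar F_i} \;=\; \xb_{F_i \setminus F_j}.
\]
So by Proposition~\ref{colon ideal}, $J_i$ is generated by the monomials $\{\xb_{F_i \setminus F_j} : j<i\}$, and a generator is linear precisely when $|F_i \setminus F_j| = 1$, i.e.\ when $F_i \cap F_j$ is a maximal proper face of $F_i$.

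First I would record the combinatorial reformulation of the shelling condition. Since $\langle F_1,\ldots,F_{i-1}\rangle \cap \langle F_i \rangle = \bigcup_{j<i} \langle F_j \cap F_i \rangle$, its facets are exactly the maximal elements of the collection $\{F_j \cap F_i : j<i\}$. Therefore $F_1,\ldots,F_r$ is a shelling order if and only if for every $2 \leq i \leq r$ and every $j < i$, there exist $k<i$ and $v \in F_i$ such that
\[
F_j \cap F_i \;\subseteq\; F_i \setminus \{v\} \;=\; F_k \cap F_i .
\]

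Next I would show this is exactly the linear quotients condition on $J_i$. The implication (i)$\Rightarrow$(ii): under the condition above, the variable $x_v = \xb_{F_i \setminus F_k}$ lies in $J_i$ and divides the generator $\xb_{F_i \setminus F_j}$, so every generator of $J_i$ is a multiple of some variable $x_v$ belonging to $J_i$. Hence $J_i$ is generated by linear forms. Conversely, for (ii)$\Rightarrow$(i): if $J_i$ has a linear generating set, then for each $j<i$ the generator $\xb_{F_i \setminus F_j}$ is divisible by some linear form in $J_i$. Any linear form in $J_i$ must itself be one of the generators $\xb_{F_i \setminus F_k}$ with $|F_i \setminus F_k|=1$, say $F_i \setminus F_k = \{v\}$. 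Divisibility $x_v \mid \xb_{F_i \setminus F_j}$ means $v \notin F_j$, which gives $F_j \cap F_i \subseteq F_i \setminus \{v\} = F_k \cap F_i$, recovering the shelling condition.

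There is essentially no hard step; the work is purely bookkeeping with complements. The one point to be careful about is the passage, in the converse direction, from ``$J_i$ is generated by some linear forms'' to ``$J_i$ is generated by linear forms among the canonical generators $\xb_{F_i\setminus F_j}$''. This follows because $J_i$ is a monomial ideal, so its unique minimal monomial generating set is contained in $\{\xb_{F_i \setminus F_j} : j<i\}$, and a monomial ideal is generated by linear forms iff each element of its minimal generating set is a variable. With this remark in place the argument closes cleanly.
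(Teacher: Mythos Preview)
The paper does not supply its own proof of this proposition; it is quoted from \cite[Proposition~8.2.5]{HHBook} without argument. Your proof is correct and is essentially the standard one found in that reference: use Proposition~\ref{colon ideal} to identify the generators of the colon ideal $J_i$ with the monomials $\xb_{F_i\setminus F_j}$, and observe that such a generator is a variable precisely when $F_i\cap F_j$ is a maximal proper face of $F_i$, which is the shelling condition in its well-known exchange form. Your remark at the end about the minimal monomial generating set handles the only potential subtlety. (One harmless over-assumption: the identity $\gcd(\xb_{\bar F},\xb_{\bar G})=\xb_{\bar F\cap\bar G}$ holds for arbitrary subsets $F,G\subseteq[n]$, so you need not assume $F$ and $G$ have the same dimension.)
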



\subsection{Clutters}

In this part we recall some definitions about clutters and their associated ideals. 
\begin{defn}[Clutter] \label{SC} 
A \textit{clutter} $\mathcal{C}$ on the vertex set $[n]$ is a collection of subsets of $[n]$, called \textit{circuits} of $\mathcal{C}$, such that if $F_1$ and $F_2$ are distinct circuits, then $F_1 \nsubseteq F_2$.
A \textit{$d$-circuit} is a circuit consisting of exactly $d$ vertices, and a clutter is called \textit{$d$-uniform} if every circuit has $d$ vertices.
\end{defn}

A subclutter of a clutter $\mathcal{C}$ is a subset of $\mathcal{C}$. If $\mathcal{C}$ is a clutter on the vertex set $[n]$ and $W \subseteq [n]$, then the \textit{induced subclutter} of $\mathcal{C}$ on $W$, $\mathcal{C}\lceil_W$, is defined as:
$$\mathcal{C}\lceil_W= \{F \in \mathcal{C} \colon \quad F \subseteq W \}.$$

For a non-empty clutter $\mathcal{C}$ on the vertex set $[n]$, we define the ideal $I \left( \mathcal{C} \right)$, as follows:
$$I(\mathcal{C}) = \left(  \textbf{x}_T \colon \quad T \in \mathcal{C} \right),$$
and we define $I(\varnothing) = 0$. The ideal $I \left( {\mathcal{C}} \right)$ is called the \textit{circuit ideal} of $\mathcal{C}$. 

Let $n$, $d$ be positive integers. For $n \geq d$, by we mean the complete clutter on the vertex set $V$ with $|V|=n$, that is
$$\mathcal{C}_{n,d} = \left\{ F \subseteq V \colon \quad |F|=d \right\}.$$
This clutter is called  the \textit{complete $d$-uniform clutter} on $V$ with $n$ vertices.
In the case that $n<d$, we let $\mathcal{C}_{n,d}$ be some isolated points. It is well-known that for $n \geq d$ the ideal $I \left( \mathcal{C}_{n,d} \right)$ has a $d$-linear resolution (see e.g. \cite[Example 2.12]{MNYZ}). 

If $\mathcal{C}$ is a $d$-uniform clutter on $[n]$, we define $\bar{\mathcal{C}}$, the \textit{complement} of $\mathcal{C}$, to be
\begin{equation*}
\bar{\mathcal{C}} = \mathcal{C}_{n,d} \setminus \mathcal{C} = \{F \subseteq [n] \colon \quad |F|=d, \,F \notin \mathcal{C}\}.
\end{equation*}

Frequently in this paper, we take a $d$-uniform clutter $\mathcal{C} \neq \mathcal{C}_{n,d}$ on the vertex set $[n]$ and consider the squarefree monomial ideal $I=I(\bar{\mathcal{C}})$ in the polynomial ring $S=\mathbb{K}[x_1, \ldots, x_n]$. 


\subsection{Chordal clutters}
In the following we recall some definitions and concepts from \cite{BYZ}.

Let $\mathcal{C}$ be a $d$-uniform clutter on the vertex set $[n]$ and let $\Delta (\C)$ be the simplicial complex on the vertex set $[n]$ with $I_{\Delta (\C)} = I \left( \bar{\C} \right)$. The simplicial complex $\Delta (\C )$ is called the \textit{clique complex} of $\C$ and a face $F \in \Delta (\C)$ is called a \textit{clique} in $\C$. It is easily seen that $F \subseteq [n]$ is a clique in $\C$ if and only if either $|F|<d$ or else all $d$-subsets of $F$ belongs to $\C$.

For any $(d-1)$-subset $e$ of $[n]$ and a $d$-uniform clutter $\mathcal{C}$, let
\begin{equation*}
{N}_{\mathcal{C}} \left[ e \right] = e \cup \lbrace c \in \left[ n \right] \colon \quad e \cup \lbrace c \rbrace \in \mathcal{C} \rbrace.
\end{equation*}
We call ${N}_{\mathcal{C}} \left[ e \right]$ the \textit{closed neighborhood} of $e$ in $\mathcal{C}$.  In the case that $e \neq N_{\mathcal{C}} \left[ e \right]$ (i.e. $e \subset F$, for some $F \in \mathcal{C}$), $e$ is called a \textit{maximal subcircuit} of $\mathcal{C}$. The set of all maximal subcircuits of $\mathcal{C}$ is denoted by ${\rm SC}\left(\mathcal{C}\right)$. We say that $e$ is \textit{simplicial} over $\mathcal{C}$, if ${N}_{\mathcal{C}} \left[ e \right] \in \Delta(\mathcal{C})$. One may note that a $(d-1)$-subset of $[n]$ which is not a maximal subcircuit is simplicial over $\mathcal{C}$. If $e \in \mathrm{SC} \left(\mathcal{C}\right)$ and  $e$ is simplicial over $\mathcal{C}$, then $e$ is called a \textit{simplicial maximal subcircuit} of $\mathcal{C}$. Let us denote by $\mathrm{Simp} \left( \mathcal{C} \right)$, the set of all $(d-1)$-subsets of $[n]$ which are simplicial over $\mathcal{C}$.

Let $\mathcal{C}$ be a clutter and let $e$ be a subset of $[n]$. By $\mathcal{C} \setminus e$ we mean the clutter 
$$ \left\{F \in \mathcal{C} \colon \ e \nsubseteq F \right\}.$$
This clutter is called the \textit{deletion of $e$ from} $\mathcal{C}$.

\begin{defn}[{\cite[Definition 3.1]{BYZ}}] \label{def of chordal}
Let $\mathcal{C}$ be a $d$-uniform clutter. We call $\mathcal{C}$ a \textit{chordal clutter}, if either $\mathcal{C}=\varnothing$, or $\mathcal{C}$ admits a simplicial maximal subcircuit $e$ such that $\mathcal{C}\setminus e$ is chordal.
\end{defn}
Following the notation in \cite{BYZ}, we use $\mathfrak{C}_d$ to denote the class of all $d$-uniform chordal clutters. 

\begin{defn}
Let $\mathcal{C}$ be a $d$-uniform clutter. A sequence of  $(d-1)$-subsets of $[n]$, say $\eb= e_1, \ldots, e_r$, is called a \textit{simplicial sequence} in $\mathcal{C}$, if $e_1$ is  simplicial over $\mathcal{C}$ and $e_i$ is simplicial over $\left( \left( \left( \mathcal{C} \setminus e_{1} \right) \setminus e_2 \right)  \setminus \cdots \right) \setminus e_{i-1}$ for all $1 <i \leq r$.
\end{defn}

As seen from the definition, the $d$-uniform clutter $\mathcal{C}$ is chordal if either $\mathcal{C} = \varnothing$, or else there exists a sequence of maximal subcircuits of $\mathcal{C}$, say $\eb=e_1, \ldots, e_t$, such that $e_1$ is simplicial maximal subcircuit over $\mathcal{C}$, $e_i$ is simplicial maximal subcircuit over $\left( \left( \left( \mathcal{C} \setminus e_{1} \right) \setminus e_2 \right)  \setminus \cdots \right) \setminus e_{i-1}$ for all $i>1$, and $\left( \left( \left(  \mathcal{C} \setminus e_{1} \right) \setminus e_2 \right) \setminus \cdots \right) \setminus e_{t} = \varnothing$.  The sequence $\eb$ is called a {\em simplicial order} of $\mathcal{C}$.

To simplify the notation, given a $d$-uniform clutter $\mathcal{C}$ and a simplicial sequence $\textbf{e} = e_1, \ldots, e_r$ in $\mathcal{C}$, we use $\mathcal{C}_\textbf{e}^0$ for  $\mathcal{C}$ and $\mathcal{C}_\textbf{e}^i $ for $\left( \mathcal{C} \setminus e_1 \right) \setminus \cdots \setminus e_{i}$ for $i\geq 1$. 

In \cite[Remark~2]{BYZ} it is mentioned that Definition~\ref{def of chordal} coincides with the graph theoretical definition of chordal graphs in the case $d=2$.

\begin{ex}
In Figure~\ref{F2}, the clutter $\mathcal{C}$ is chordal, while the clutter $\mathcal{D}$ is not.
	\begin{align*}
	&\mathcal{C}=\{ \{ 1,2,3 \}, \{ 1,2,4 \}, \{ 1,3,4 \}, \{ 2,3,4 \}, \{ 1,2,5 \},
	\{ 1,2,6 \}, \{ 1,5,6 \}, \{ 2,5,6 \} \}. \\
	&\mathcal{D}=\{\{1,2,3\}, \{1,2,4\}, \{1,3,4\},\{2,3,5\}, \{2,4,5\}, \{3,4,5\}\}.
	\end{align*}
	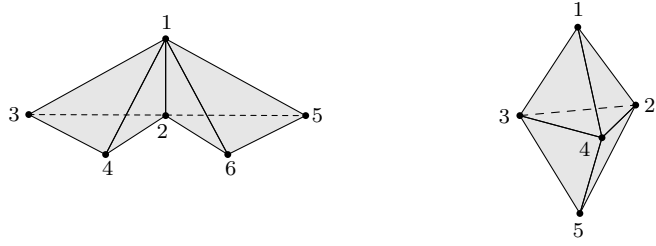
\begin{figure}[H]
		\begin{tikzpicture}[line cap=round,line join=round,>=triangle 45,x=0.7cm,y=0.7cm]
		\clip(5.26,2.0) rectangle (12.44,6.8);
		\fill[fill=black,fill opacity=0.1] (8.8,5.94) -- (6.2,4.5) -- (7.66,3.74) -- cycle;
		\fill[fill=black,fill opacity=0.1] (8.8,5.94) -- (8.8,4.48) -- (7.66,3.74) -- cycle;
		\fill[fill=black,fill opacity=0.1] (8.8,5.94) -- (11.46,4.48) -- (9.98,3.74) -- cycle;
		\fill[fill=black,fill opacity=0.1] (8.8,5.94) -- (8.8,4.48) -- (9.98,3.74) -- cycle;
		\draw (8.8,5.94)-- (6.2,4.5);
		\draw (6.2,4.5)-- (7.66,3.74);
		\draw (7.66,3.74)-- (8.8,5.94);
		\draw (8.8,5.94)-- (8.8,4.48);
		\draw (8.8,4.48)-- (7.66,3.74);
		\draw (7.66,3.74)-- (8.8,5.94);
		\draw (8.8,5.94)-- (11.46,4.48);
		\draw (11.46,4.48)-- (9.98,3.74);
		\draw (9.98,3.74)-- (8.8,5.94);
		\draw (8.8,5.94)-- (8.8,4.48);
		\draw (8.8,4.48)-- (9.98,3.74);
		\draw (9.98,3.74)-- (8.8,5.94);
		\draw [dash pattern=on 2pt off 2pt] (6.2,4.5)-- (11.46,4.48);
		\draw (8.52,6.58) node[anchor=north west] {\begin{scriptsize}1\end{scriptsize}};
		\draw (8.44,4.50) node[anchor=north west] {\begin{scriptsize}2\end{scriptsize}};
		\draw (5.62,4.83) node[anchor=north west] {\begin{scriptsize}3\end{scriptsize}};
		\draw (7.4,3.8) node[anchor=north west] {\begin{scriptsize}4\end{scriptsize}};
		\draw (11.38,4.82) node[anchor=north west] {\begin{scriptsize}5\end{scriptsize}};
		\draw (9.74,3.8) node[anchor=north west] {\begin{scriptsize}6\end{scriptsize}};
		\begin{scriptsize}
		\fill [color=black] (8.8,5.94) circle (1.3pt);
		\fill [color=black] (6.2,4.5) circle (1.3pt);
		\fill [color=black] (7.66,3.74) circle (1.3pt);
		\fill [color=black] (8.8,4.48) circle (1.3pt);
		\fill [color=black] (11.46,4.48) circle (1.3pt);
		\fill [color=black] (9.98,3.74) circle (1.3pt);
		\end{scriptsize}
		\end{tikzpicture}
		\quad
		\quad
		\quad
		\begin{tikzpicture}[line cap=round,line join=round,>=triangle 45,x=.7cm,y=.7cm]
		\clip(13.89,2.00) rectangle (18.76,6.8);
		\fill[fill=black,fill opacity=0.1] (16.24,6.16) -- (15.14,4.48) -- (16.7,4.06) -- cycle;
		\fill[fill=black,fill opacity=0.1] (16.24,6.16) -- (17.34,4.68) -- (16.7,4.06) -- cycle;
		\fill[fill=black,fill opacity=0.1] (15.14,4.48) -- (16.28,2.62) -- (16.7,4.06) -- cycle;
		\fill[fill=black,fill opacity=0.1] (17.34,4.68) -- (16.28,2.62) -- (16.7,4.06) -- cycle;
		\draw (16.24,6.16)-- (15.14,4.48);
		\draw (15.14,4.48)-- (16.7,4.06);
		\draw (16.7,4.06)-- (16.24,6.16);
		\draw (16.24,6.16)-- (17.34,4.68);
		\draw (17.34,4.68)-- (16.7,4.06);
		\draw (16.7,4.06)-- (16.24,6.16);
		\draw [dash pattern=on 3pt off 3pt] (15.14,4.48)-- (17.34,4.68);
		\draw (15.14,4.48)-- (16.28,2.62);
		\draw (16.28,2.62)-- (16.7,4.06);
		\draw (16.7,4.06)-- (15.14,4.48);
		\draw (17.34,4.68)-- (16.28,2.62);
		\draw (16.28,2.62)-- (16.7,4.06);
		\draw (16.7,4.06)-- (17.34,4.68);
		\draw (15.95,6.83) node[anchor=north west] {\begin{scriptsize}1\end{scriptsize}};
		\draw (17.31,5) node[anchor=north west] {\begin{scriptsize}2\end{scriptsize}};
		\draw (14.55,4.8) node[anchor=north west] {\begin{scriptsize}3\end{scriptsize}};
		\draw (16.07,4.17) node[anchor=north west] {\begin{scriptsize}4\end{scriptsize}};
		\draw (15.95,2.62) node[anchor=north west] {\begin{scriptsize}5\end{scriptsize}};
		\begin{scriptsize}
		\fill [color=black] (16.24,6.16) circle (1.3pt);
		\fill [color=black] (15.14,4.48) circle (1.3pt);
		\fill [color=black] (16.7,4.06) circle (1.3pt);
		\fill [color=black] (17.34,4.68) circle (1.3pt);
		\fill [color=black] (16.28,2.62) circle (1.3pt);
		\end{scriptsize}
		\end{tikzpicture}
		\caption{The Clutter $\mathcal{C}$ on the left and $\mathcal{D}$ on the right}
		\label{F2}
	\end{figure}

Indeed, while the clutter $\mathcal{D}$ does not have any simplicial maximal subcircuit, one of the possible simplicial orders for $\C$ is the following:
\begin{center}
\begin{tabular}{lll}
$e_1 = \{ 1, 3 \}$ & $e_2 = \{ 1, 4 \}$ & $e_3 = \{ 2, 4 \}$\\
$e_4 = \{ 1, 2 \}$ & $e_5 = \{ 2, 6 \}$ & $e_6 = \{ 1, 5 \}$\\
\end{tabular}
\end{center}
\end{ex}

A celebrated theorem of Fr\"oberg \cite{Fr} gives a complete characterization of squarefree monomial ideals generated in degree $2$ with linear resolution. 
\begin{thm}[Fr\"oberg's theorem]\label{Froberg}
Let $\mathcal{C}$ be a $2$-uniform clutter (i.e. a graph) and $I=I\left( \bar{\mathcal{C}} \right)$. Then $I$ has a linear resolution if and only if $\mathcal{C}$ is chordal.
\end{thm}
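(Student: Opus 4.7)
The plan is to prove the two directions separately, with quite different methods. The easy direction---non-chordal implies non-linear resolution---I would handle via Hochster's formula, using any induced cycle of length at least four as the obstruction. The hard direction---chordal implies linear resolution---I would prove by induction on $n$, using Dirac's theorem that every chordal graph has a simplicial vertex.

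For necessity (contrapositive): suppose $\C$ contains an induced cycle on a vertex set $W$ with $|W| = k \geq 4$. Because $W$ is induced, the cliques of $\C$ lying inside $W$ are precisely the $k$ vertices and the $k$ cycle edges, so $\Delta(\C)\vert_W$ is homeomorphic to $S^1$ and $\tilde{H}_1(\Delta(\C)\vert_W;\mathbb{K}) \cong \mathbb{K}$. Hochster's formula then yields
$$
\beta_{k-3,\, k}\bigl(I(\bar{\C})\bigr) \;\geq\; \dim_{\mathbb{K}} \tilde{H}_1\bigl(\Delta(\C)\vert_W; \mathbb{K}\bigr) \;=\; 1.
$$
Since $I(\bar{\C}) = I_{\Delta(\C)}$ is generated in degree $2$, a $2$-linear resolution requires $\beta_{i,j} = 0$ whenever $j \neq i+2$; here $j - i = k - (k-3) = 3$, so no linear resolution can exist.

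For sufficiency: induct on $n = |V(\C)|$. Let $v$ be a simplicial vertex of $\C$ (Dirac's theorem), and let $\C'$ be the induced subclutter on $[n]\setminus\{v\}$, which remains chordal. By induction and the Herzog--Hibi--Zheng theorem (cited in the excerpt as \cite[Theorem~3.2]{HHZ}), $I(\bar{\C'})$ has linear quotients. Writing $I(\bar{\C}) = I(\bar{\C'}) + x_v\cdot J$ with $J = \bigl(x_w : w \in [n-1]\setminus N_{\C}[v]\bigr)$, I would extend the given linear-quotients order of $I(\bar{\C'})$ by appending the new generators $x_v x_w$ in an order compatible with a perfect elimination order of $\C$, and verify via Proposition~\ref{colon ideal} that each new colon ideal is generated by linear forms. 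The crucial input from simpliciality is that every non-edge $\{a,b\}$ of $\C'$ must meet $[n-1]\setminus N_{\C}[v]$: otherwise $\{a,b\}\subseteq N_{\C}(v)$ would be an edge, since $N_{\C}(v)$ is a clique. This is exactly what supplies the linear forms needed to absorb the potentially quadratic generators of the colon ideals.

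Main obstacle: the sufficiency step, specifically the bookkeeping showing that every candidate quadratic generator of the successive colon ideals reduces to a linear one. Simpliciality of $v$ is the precise hypothesis that makes this absorption possible; without chordality, induced cycles yield genuine quadratic obstructions of the kind exhibited in the necessity direction, and the argument breaks down at exactly that point.
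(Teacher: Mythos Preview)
The paper does not give its own proof of Fr\"oberg's theorem; it is quoted as a known result with citation \cite{Fr}, so there is no in-paper argument to compare against. Your outline is one of the standard modern routes, and the necessity direction via Hochster's formula is correct as written.

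Two issues with the sufficiency direction. First, invoking \cite[Theorem~3.2]{HHZ} to pass from ``$I(\bar{\C'})$ has a linear resolution'' to ``$I(\bar{\C'})$ has linear quotients'' is circular: the Herzog--Hibi--Zheng proof of that implication itself goes through Fr\"oberg's characterization. The fix is immediate---strengthen the induction hypothesis to ``chordal implies linear quotients'' and drop the appeal to HHZ entirely.

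Second, appending the new generators $x_v x_w$ \emph{after} those of $I(\bar{\C'})$ genuinely requires a careful order on the $w$'s; with a bad order a quadratic generator can survive in a colon ideal. For instance, take $\C$ to be the disjoint union of a triangle on $\{v,1,2\}$ and a path $4$--$3$--$5$; then $v$ is simplicial with non-neighbours $\{3,4,5\}$, and if $x_v x_3$ is the first of the appended generators one finds $I(\bar{\C'}):x_vx_3=(x_1,x_2,x_4x_5)$, with $x_4x_5$ irreducible there. A cleaner arrangement is to place all the $x_v x_w$ \emph{before} the generators of $I(\bar{\C'})$: then each colon $(x_v x_{w_1},\dots,x_v x_{w_{i-1}}):x_v x_{w_i}=(x_{w_1},\dots,x_{w_{i-1}})$ is visibly linear, and for each later $x_a x_b\in I(\bar{\C'})$ your simpliciality observation (some $w_j\in\{a,b\}$) puts $x_v$ into the colon, absorbing every degree-two contribution from the $x_v x_w$ block; the remaining part of the colon is linear by the induction hypothesis. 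With these two adjustments the argument goes through.
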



\subsection{Simplicial subclutters}\label{simp sub}
\cite[Definition~2.1]{MA}: Let $\mathcal{C}$ be a $d$-uniform clutter on the vertex set $[n]$ and $\mathcal{D} \subsetneq \mathcal{C}$ be a subclutter of $\mathcal{C}$. We say that $\mathcal{D}$ is a \textit{simplicial subclutter} of $\mathcal{C}$, if there exists a sequence of $(d-1)$-subsets of $[n]$, say $\textbf{e}= e_1, \ldots, e_t$, and $A_i \subseteq \left\{ F \in \mathcal{C}^{i-1}_\textbf{e} \colon \; e_i \subset F \right\}$, $i=1, \ldots, t$, such that 
\begin{itemize}
\item[(i)] $e_1$ is simplicial over $\mathcal{C}$;
\item[(ii)] $e_i$ is simplicial over $\mathcal{C} \setminus A_1 \setminus \cdots \setminus A_{i-1}$, for $i>1$;
\item[(iii)] $\mathcal{D} = \mathcal{C} \setminus A_1 \setminus \cdots \setminus A_t$.
\end{itemize}

\begin{thm}[{\cite[Corollary 2.5(c) and Corollary 2.9]{MA}}] \label{simplicial subclutter has linear quotients-cor}
Let $\mathcal{C}$ be a $d$-uniform clutter and let $\mathcal{D}$ be a simplicial subclutter of $\mathcal{C}$.
\begin{itemize}
\item[\rm (i)] $I\left(\bar{\mathcal{C}}\right)$ has a linear resolution if and only if $I\left(\bar{\mathcal{D}}\right)$ has a linear resolution.
\item[\rm (ii)] If $I\left(\bar{\mathcal{C}}\right)$ has linear quotients then  $I\left(\bar{\mathcal{D}}\right)$ has linear quotients.
\end{itemize}
\end{thm}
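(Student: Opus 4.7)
The plan is to reduce both statements to a one-step assertion and then carry out a single colon ideal computation. Since $\mathcal{D}$ is built from $\mathcal{C}$ by successive deletions $\mathcal{C} \rightsquigarrow \mathcal{C}\setminus A_1 \rightsquigarrow \cdots \rightsquigarrow \mathcal{D}$, I would induct on $t$ and within each $A_i$ remove one circuit at a time. This is legitimate because if $e$ is simplicial over a clutter $\mathcal{C}$ and one deletes a single circuit $F \in \mathcal{C}$ containing $e$, then $N_{\mathcal{C}\setminus\{F\}}[e] \subsetneq N_{\mathcal{C}}[e]$, which is still a clique (subsets of cliques are cliques), so $e$ remains simplicial. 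It therefore suffices to prove the following one-step statement: if $e$ is simplicial over $\mathcal{C}$, $F_0 = e \cup \{c_0\} \in \mathcal{C}$, and $\mathcal{C}' := \mathcal{C} \setminus \{F_0\}$, then $I(\bar{\mathcal{C}'}) = I(\bar{\mathcal{C}}) + (\mathbf{x}_{F_0})$ inherits both properties from $I(\bar{\mathcal{C}})$.

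The heart of the proof is the computation of the colon ideal $J := I(\bar{\mathcal{C}}) : \mathbf{x}_{F_0}$. By Proposition~\ref{colon ideal}, $J$ is generated by $\{\mathbf{x}_{T \setminus F_0} : T \in \bar{\mathcal{C}}\}$. I claim every such monomial is divisible by one of the form $x_v$. Indeed, for $T \in \bar{\mathcal{C}}$, the hypothesis $N_{\mathcal{C}}[e] \in \Delta(\mathcal{C})$ combined with $T \notin \mathcal{C}$ forces $T \not\subseteq N_{\mathcal{C}}[e]$; pick $v \in T \setminus N_{\mathcal{C}}[e]$. Then $e \cup \{v\} \in \bar{\mathcal{C}}$ (by the definition of $N_{\mathcal{C}}[e]$), and $v \notin F_0$ (otherwise $v \in e$ or $v = c_0$, both of which would place $v$ in $N_{\mathcal{C}}[e]$). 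Consequently $x_v = \mathbf{x}_{(e \cup \{v\}) \setminus F_0}$ lies in $J$ and divides $\mathbf{x}_{T \setminus F_0}$, so $J$ is generated by variables.

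Part (ii) then follows immediately: if $u_1, \ldots, u_r$ realizes linear quotients for $I(\bar{\mathcal{C}})$, then appending $\mathbf{x}_{F_0}$ at the end realizes linear quotients for $I(\bar{\mathcal{C}'})$. For part (i), I would use the short exact sequence
\begin{equation*}
0 \to I(\bar{\mathcal{C}}) \to I(\bar{\mathcal{C}'}) \to (S/J)(-d) \to 0.
\end{equation*}
Since $J$ is generated by variables, $S/J$ admits a Koszul resolution, so $\Tor_i^S((S/J)(-d), \mathbb{K})_j$ is concentrated in the single degree $j = i + d$. The Tor long exact sequence then immediately yields the forward direction: $d$-linearity of $I(\bar{\mathcal{C}})$ transfers to $I(\bar{\mathcal{C}'})$.

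The main obstacle is the converse direction of (i). Assuming $I(\bar{\mathcal{C}'})$ has a $d$-linear resolution, the Tor long exact sequence immediately kills $\Tor_i(I(\bar{\mathcal{C}}))_j$ for $j \notin \{i + d,\; i + d + 1\}$; controlling the remaining case $j = i + d + 1$ reduces to showing that the connecting map $\Tor_{i+1}(I(\bar{\mathcal{C}'}))_{i+d+1} \to \Tor_{i+1}((S/J)(-d))_{i+d+1}$ is surjective. The approach I would take is to build an explicit (possibly non-minimal) resolution of $I(\bar{\mathcal{C}'})$ as the mapping cone of the Koszul complex of $(S/J)(-d)$ over a linear resolution of $I(\bar{\mathcal{C}})$, and to verify that this cone supplies enough first syzygies of $\mathbf{x}_{F_0}$ against the linear generators of $J$ to surject onto the Koszul generators in the relevant degree. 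The geometric input---that every linear generator $x_v$ of $J$ arises from an actual non-circuit $e \cup \{v\} \in \bar{\mathcal{C}}$---provides precisely the explicit syzygies needed to close this gap.
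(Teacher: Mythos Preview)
This theorem is not proved in the present paper; it is quoted from \cite{MA} and used as a black box. So there is no in-paper proof to compare against, and I will simply assess your argument on its merits.

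Your reduction to the one-step case is correct, and the colon-ideal computation is clean: the simpliciality of $e$ forces every $T\in\bar{\mathcal C}$ to leave $N_{\mathcal C}[e]$, and the witness $x_v$ with $v\in T\setminus N_{\mathcal C}[e]$ comes from the genuine non-circuit $e\cup\{v\}$. This immediately yields (ii) and the forward implication in (i) via the short exact sequence, exactly as you wrote.

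The gap is in the converse of (i). You correctly isolate the issue as surjectivity of
\[
\Tor_{i+1}\bigl(I(\bar{\mathcal C}')\bigr)_{i+d+1}\longrightarrow \Tor_{i+1}\bigl((S/J)(-d)\bigr)_{i+d+1},
\]
but your proposed fix is circular as written: building the mapping cone ``over a linear resolution of $I(\bar{\mathcal C})$'' presupposes exactly the linearity you are trying to establish. If instead you meant the minimal resolution of $I(\bar{\mathcal C})$ (not yet known to be linear), then the cone argument only recovers the inequality $\beta_{i,j}(I(\bar{\mathcal C}'))\le \beta_{i,j}(I(\bar{\mathcal C}))+\beta_{i,j}((S/J)(-d))$, which points the wrong way; the possible cancellations in the cone do not obviously force $\beta_{i,i+d+1}(I(\bar{\mathcal C}))=0$.

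Your ``geometric input'' is nonetheless the right ingredient; it just needs to be deployed differently. One clean route is multigraded, via Hochster's formula. For a squarefree multidegree $\mathbf a$ with $F_0\subseteq\mathbf a$, the complexes $\Delta(\mathcal C)|_{\mathbf a}$ and $\Delta(\mathcal C')|_{\mathbf a}$ differ exactly by the interval $[F_0,\ \mathbf a\cap N_{\mathcal C}[e]]$; the relative chain complex is therefore an augmented simplex, hence acyclic unless $\mathbf a\cap N_{\mathcal C}[e]=F_0$. In that remaining case the long exact sequence of the pair reduces the question to showing that the class of $\partial F_0$ is nonzero in $\tilde H_{d-2}(\Delta(\mathcal C')|_{\mathbf a})$. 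But no $(d-1)$-face of $\Delta(\mathcal C')|_{\mathbf a}$ contains $e$ (precisely because each $v\in\mathbf a\setminus F_0$ lies outside $N_{\mathcal C}[e]$, so $e\cup\{v\}\notin\mathcal C'$), and hence the $(d-2)$-face $e$, which appears with coefficient $\pm 1$ in $\partial F_0$, cannot be hit by any boundary. This gives the needed surjectivity and finishes the converse. Filling this in would make your proof complete.
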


\begin{rem}
One may easily check that a simplicial subgraph of a chordal graph is again a chordal graph. However, it is not known that whether any simplicial subclutter of a chordal clutter is again chordal.
\end{rem}

\begin{rem}
If $\mathcal{C}$ is a $d$-uniform chordal clutter, then $\varnothing$ is a simplicial subclutter of $\mathcal{C}$. Hence by Theorem~\ref{simplicial subclutter has linear quotients-cor}(i) the ideal $I(\bar{\mathcal{C}})$ has a linear resolution (over all fields). This gives a generalization of Fr\"oberg's theorem in one direction.
\end{rem}


\section{Decomposable clutters} \label{decomposable}
In this section, we introduce a class of  uniform clutters, called \textit{decomposable} clutters, whose  associated ideals have linear quotients. We show by an example that this class is not equivalent to the class of equigenerated ideals with linear quotients. However, in the next section, we show that chordality of such clutters is indeed a generalization of Simon's conjecture.

\begin{defn}[Decomposable clutter]
A  \textit{decomposable clutter} is a  $d$-uniform clutter obtained  recursively as follows:
\begin{itemize}
\item[\rm(i)] $\mathcal{C}_{n,d}$  is a  decomposable clutter;
\item[\rm(ii)] If $\mathcal{D}=\mathcal{C}_1 \cup \mathcal{C}_2$, where $\mathcal{C}_1, \mathcal{C}_2$ are decomposable clutters on the vertex sets  $V\left(\mathcal{C}_1\right)$, $V\left(\mathcal{C}_{2}\right)$, respectively,  with the property that $V(\mathcal{C}_1) \nsubseteq V(\mathcal{C}_2)$, $V(\mathcal{C}_2) \nsubseteq V(\mathcal{C}_1)$ and  $V\left(\mathcal{C}_1\right)\cap V\left(\mathcal{C}_{2}\right)$ is a clique in both $\mathcal{C}_1, \mathcal{C}_2$, then $\mathcal{D}$ is  decomposable;
\item[\rm(iii)] If $\mathcal{C}$ is a  decomposable clutter, then every simplicial subclutter of $\mathcal{C}$ is decomposable.
\end{itemize}
We denote by $\mathfrak{C'}_d$, the class of all $d$-uniform decomposable clutters.
\end{defn}

\begin{ex}
Let $\mathcal{C}$ be the $3$-uniform clutter shown in Figure~\ref{example of SC}.
$$\mathcal{C}= \big\{ \{1,2,3\},\{1,2,4\},\{1,3,4\}, \{4,5,6\},\{4,5,7\},\{4,6,7\},\{5,6,7\} \big\}.$$
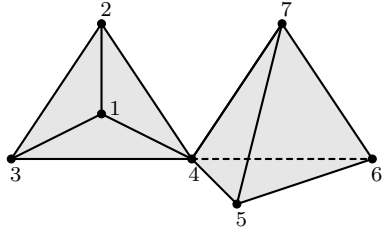
\begin{figure}[H]
\begin{center}
\begin{tikzpicture}[line cap=round,line join=round,>=triangle 45,x=0.3cm,y=0.3cm]
\clip(3.3,3) rectangle (21.5,14.5);
\fill[line width=1.2pt,fill=black,fill opacity=0.10000000149011612] (8.,12.) -- (4.,6.) -- (12.,6.) -- cycle;
\fill[line width=1.2pt,fill=black,fill opacity=0.10000000149011612] (16.,12.) -- (14.,4.) -- (12.,6.) -- cycle;
\fill[line width=1.2pt,fill=black,fill opacity=0.10000000149011612] (16.,12.) -- (14.,4.) -- (20.,6.) -- cycle;
\draw [line width=0.8pt] (8.,12.)-- (4.,6.);
\draw [line width=0.8pt] (4.,6.)-- (12.,6.);
\draw [line width=0.8pt] (12.,6.)-- (8.,12.);
\draw [line width=0.8pt] (8.,8.)-- (8.,12.);
\draw [line width=0.8pt] (8.,8.)-- (4.,6.);
\draw [line width=0.8pt] (8.,8.)-- (12.,6.);
\draw [line width=0.8pt,dash pattern=on 2pt off 2pt] (12.,6.)-- (20.,6.);
\draw [line width=0.8pt] (16.,12.)-- (12.,6.);
\draw [line width=0.8pt] (16.,12.)-- (20.,6.);
\draw [line width=0.8pt] (14.,4.)-- (20.,6.);
\draw [line width=0.8pt] (16.,12.)-- (14.,4.);
\draw [line width=0.8pt] (14.,4.)-- (12.,6.);
\draw [line width=0.8pt] (12.,6.)-- (16.,12.);
\draw (7.9,9) node[anchor=north west] {\begin{scriptsize}1\end{scriptsize}};
\draw (7.5,13.4) node[anchor=north west] {\begin{scriptsize}2\end{scriptsize}};
\draw (3.5,6.1) node[anchor=north west] {\begin{scriptsize}3\end{scriptsize}};
\draw (11.4,6.1) node[anchor=north west] {\begin{scriptsize}4\end{scriptsize}};
\draw (15.5,13.4) node[anchor=north west] {\begin{scriptsize}7\end{scriptsize}};
\draw (13.5,4.1) node[anchor=north west] {\begin{scriptsize}5\end{scriptsize}};
\draw (19.5,6.1) node[anchor=north west] {\begin{scriptsize}6\end{scriptsize}};
\begin{scriptsize}
\draw [fill=black] (8.,12.) circle (1.5pt);
\draw [fill=black] (4.,6.) circle (1.5pt);
\draw [fill=black] (12.,6.) circle (1.5pt);
\draw [fill=black] (8.,8.) circle (1.5pt);
\draw [fill=black] (20.,6.) circle (1.5pt);
\draw [fill=black] (16.,12.) circle (1.5pt);
\draw [fill=black] (14.,4.) circle (1.5pt);
\end{scriptsize}
\end{tikzpicture}
\end{center}
\caption{A $3$-uniform decomposable clutter}
\label{example of SC}
\end{figure}

We show that $\mathcal{C}$ is decomposable. Note that $\mathcal{C}=\mathcal{C}_1\cup\mathcal{C}_2$, where 
\[\mathcal{C}_1=\{\{1,2,3\},\{1,2,4\},\{1,3,4\}\}, \quad\quad\quad \mathcal{C}_2=\{\{4,5,6\},\{4,5,7\},\{4,6,7\},\{5,6,7\}\}.
\]
 Consider $\mathcal{C}_{4,3}$ on the vertex set $[4]$. Any maximal subcircuit of a complete clutter is a simplicial one, and so $\{2,3\}$ is simplicial in $\mathcal{C}_{4,3}$. Let $A_1=\{\{2,3,4\}\}$. Then $\C_1=\C_{4,3}\setminus A_1$. Hence $\C_1$ is a simplicial subclutter of $\C_{4,3}$. By definition, $\C_{4,3}$ is decomposable which follows  that $\C_1$ is also decomposable. On the other hand $\C_2=\C_{4,3}$ on the vertex set $\{4,5,6,7\}$. Hence $\C_2$ is also decomposable. Since $V(\C_1)\cap V(\C_2)=\{4\}$  is a clique in $\C_1$ and $\C_2$, we conclude that $\C$ is a decomposable clutter. 
\end{ex}

Let $\mathcal{C}=\mathcal{C}_1 \cup \mathcal{C}_2$ be a $d$-uniform non-complete clutter such that $V\left(\mathcal{C}_1\right)\cap V\left(\mathcal{C}_2\right)$ is a clique in $\mathcal{C}_1$ and $\mathcal{C}_2$. It is shown in \cite[Theorem 4.10 and Remark 4.12]{MYZ} that 
\begin{equation*}
\mathrm{reg}\left( I\left(\bar{\mathcal{C}} \right) \right) = \max \{ d, \mathrm{reg}\left( I\left(\bar{\mathcal{C}}_1 \right) \right), \mathrm{reg}\left( I\left(\bar{\mathcal{C}}_2 \right)\right)\}.
\end{equation*}
This, in particular, implies that the ideal $I\left(\bar{\mathcal{C}} \right)$ has a $d$-linear resolution if and only if both of the ideals $I\left(\bar{\mathcal{C}}_1 \right)$ and $I\left(\bar{\mathcal{C}}_2 \right)$ have linear resolutions. It is natural to ask whether the statement holds if we replace ``linear resolution" by ``linear quotients". Proposition~ \ref{clique} gives an affirmative answer to this question. To prove this proposition, we need the following lemma.  By $\mathrm{Mon}(S)$ we mean the set of all monomials in the polynomial ring $S$ and for $u\in \mathrm{Mon}(S)$, we let $\supp(u)=\{i\in[n]\colon \; x_i|u\}$. 

\begin{lem}\label{intermediate}
Let $I$ be a squarefree monomial ideal in $S=\mathbb{K}[x_1,\ldots,x_n]$, and let $N_1, N_2$ be disjoint subsets of $[n]$ such that 
$$\mathcal{G}(I)=\{u\in \mathrm{Mon}(S) \colon \  \deg u=d ,\ \supp(u)\cap N_1\neq \emptyset\neq \supp(u)\cap N_2\}.$$ 
Then $I$ has linear quotients. 
\end{lem}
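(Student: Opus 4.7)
The plan is to exhibit an explicit lex-based ordering of $\mathcal{G}(I)$ witnessing linear quotients. After relabeling the variables I may assume $N_1 = \{1, \ldots, s\}$, $N_2 = \{s+1, \ldots, s+t\}$, and $[n] \setminus (N_1 \cup N_2) = \{s+t+1, \ldots, n\}$, and I order the generators of $I$ by lexicographic comparison of their sorted supports. The asymmetry of the labeling---placing $N_1$-indices strictly below $N_2$-indices---will be essential to the argument.

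By the standard criterion for linear quotients of equigenerated squarefree monomial ideals, it suffices to verify that for every $v = u_k$ and every earlier $u = u_j$ in the order, there exist $y \in \supp(u) \setminus \supp(v)$ and $w \in \mathcal{G}(I)$ preceding $v$ with $\supp(w) = (\supp(v) \setminus \{c\}) \cup \{y\}$ for some $c \in \supp(v)$; then $x_y$ lies in $(u_1, \ldots, u_{k-1}) \colon v$ and divides $u / \gcd(u,v)$. Writing $\supp(u) = \{a_1 < \cdots < a_d\}$ and $\supp(v) = \{b_1 < \cdots < b_d\}$, let $l$ be the smallest index with $a_l \neq b_l$ and set $y = a_l$, so that $y < b_l$ and $y \in \supp(u) \setminus \supp(v)$. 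My default candidate will be $w = (\supp(v) \setminus \{b_d\}) \cup \{y\}$, which automatically lies below $v$ in lex since $y < b_l \leq b_d$.

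The core of the argument is the verification that $w \in \mathcal{G}(I)$, split by a trichotomy on $y$. If $y \in N_2$, then $b_d > y \geq s+1$ forces $b_d \notin N_1$, so removing $b_d$ preserves the $N_1$-hit, while $y$ itself supplies the $N_2$-hit; and if $y \notin N_1 \cup N_2$, then $b_d > y > s+t$ forces $b_d \notin N_1 \cup N_2$, so both hits persist. The remaining case $y \in N_1$ is the delicate one: the default swap fails only when $\supp(v) \cap N_2 = \{b_d\}$, and I would patch this by swapping $b_{d-1}$ instead of $b_d$ whenever $l < d$ (still producing $w <_{\mathrm{lex}} v$ since $b_{d-1} \geq b_l > y$, and preserving $b_d \in N_2$ in $w$). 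The pivotal edge case is $y \in N_1$ with $l = d$ and $\supp(v) \cap N_2 = \{b_d\}$, and this is where I expect the main obstacle to lie: here the labeling forces $b_1, \ldots, b_{d-1} < y \leq s$, hence $b_1, \ldots, b_{d-1} \in N_1$, so $\supp(u) = \{b_1, \ldots, b_{d-1}, y\}$ lies entirely in $N_1$ and misses $N_2$, contradicting $u \in \mathcal{G}(I)$. Once this contradiction is secured, $x_y$ lies in $(u_1, \ldots, u_{k-1}) \colon v$ in every case, proving that $I$ has linear quotients with respect to the chosen order.
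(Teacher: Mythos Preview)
Your proof is correct and takes a genuinely different route from the paper's. Both arguments begin with the same relabeling $N_1 < N_2 < N_3$, but the paper then stratifies $\mathcal{G}(I)$ by the triple of cardinalities $(|\supp(u)\cap N_1|,|\supp(u)\cap N_2|,|\supp(u)\cap N_3|)$, orders first by this type and then lexicographically within each stratum, and carries out a case analysis comparing the types $(l_1,l_1')$ and $(l_2,l_2')$ of $u_i$ and $u_c$ (five subcases in all). You instead use a single global lex order on sorted supports and reduce everything to the trichotomy on where the pivot $y=a_l$ falls, with one patch (swap $b_{d-1}$ rather than $b_d$) and one impossibility (the $l=d$ edge case). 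Your argument is shorter and avoids the bookkeeping of the stratification; the paper's decomposition into the ideals $I_{(d-l)l'}$ is more explicit about which stratum the exchanged monomial lands in, which can be convenient if one later wants to read off the colon ideals themselves, but for the bare statement your approach is the more economical one.
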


\begin{proof}
Let $N_3$ be the set of indices $i\in [n]\setminus (N_1\cup N_2)$ such that $x_i$ divides $u$ for some $u\in \mathcal{G}(I)$. One may assume that $N_3=[n]\setminus (N_1\cup N_2)$.  We rename the elements of $[n]$ such that for all $i\in N_1$, $j\in N_2$, and $k\in N_3$ we have $i<j<k$.
Note that $I=\sum_{l=1}^{d-1}\sum_{l'=1}^lI_{\left(d-l\right)l'}$, where
\begin{align}\label{ajib}
I_{\left(d-l\right)l'}=\left(\prod_{t=1}^{d-l}x_{i_t}\prod_{s=1}^{l'}x_{j_s}\prod_{r=1}^{l-l'}x_{k_r}\colon \;  i_t\in N_1, j_s\in N_2, k_r\in N_3 \right).
\end{align}
We give the following order on the monomials in $\mathcal{G}(I)$:
\begin{itemize}
\item[(i)] for each $l,l'$ let  the monomial generators, $v_1,\ldots, v_r$,  of $I_{\left(d-l\right)l'}$ be ordered lexicographically induced by $x_1>x_2>\cdots>x_n$ such that $v_i<_{lex} v_{i+1}$ for $1\leq i<r$;
\item[(ii)] for  monomials $u\in \mathcal{G}\left(I_{\left(d-l_1\right)l'_1}\right)$ and $v\in \mathcal{G}\left(I_{\left(d-l_2\right)l'_2}\right)$ with $\left(l_1, l'_1\right) \neq \left(l_2, l'_2\right)$, let $u<v$ if and only if $\left(l_1, l'_1\right)< \left(l_2, l'_2\right)$; that is either (a)  $l_1<l_2$ or (b) $l_1=l_2$ and $l'_1<l'_2$;
\end{itemize}
Suppose that $I$ is generated by  monomials $u_1<\cdots<u_m$. We show that the colon ideal $\left(u_1, \ldots, u_{c-1}\right)\colon u_{c}$ is generated by variables, for $2\leq c\leq m$. 

Note that $\{u_i/ \gcd \left( u_i, u_{c} \right) \colon \ 1\leq i \leq c-1 \}$ is a set of generators of $\left(u_1, \ldots, u_{c-1}\right)\colon u_{c}$; see Proposition~\ref{colon ideal}. We show that for any  $u_i$, $i\leq c-1$, there exists $u_j$,  $j\leq c-1$, such that $u_j/\gcd\left(u_j,u_{c}\right)$ is of degree one and it divides   $u_i/ \gcd\left(u_i, u_{c}\right)$. 

By  (\ref{ajib}), $u_{c}$ belongs to $I_{\left(d-l_2\right)l'_2}$ for some $l_2, l'_2$. So $u_i$ belongs to $I_{\left(d-l_1\right)l'_1}$ for some $l_1, l'_1$, where $\left(l_1, l'_1\right) \leq\left(l_2, l'_2\right)$. Let 
\begin{align*}
u_i & = \prod_{t=1}^{d-l_1}x_{i'_t}\prod_{s=1}^{l_1'}x_{j'_s}\prod_{r=1}^{l_1-l_1'}x_{k'_r}, \text{ and}\\
u_{c} & = \prod_{t=1}^{d-l_2}x_{i_t}\prod_{s=1}^{l'_2}x_{j_s}\prod_{r=1}^{l_2-l'_2}x_{k_r}
\end{align*}
with $i_t, i'_t \in N_1$, $j_s,j'_s\in N_2$ and $k_r, k'_r\in N_3$. 
	
We consider different cases for $l_1,l_2,l'_1,l'_2$ and prove in each case that there exist $a,b\in [n]$ such that $x_a$ divides $u_{c}/u_i$ and $x_b$ divides $u_i/u_{c}$, and that $\left(u_{c}/x_a\right)x_b\in I$ with $\left(u_{c}/x_a\right)x_b<u_{c}$. Then, setting $u_j:=\left(u_{c}/x_a\right)x_b$, the assertion follows.
	
	\medspace
First suppose that $l_1=l_2$. Since $u_i<u_{c}$ we have $l'_1\leq l'_2$. Suppose that $l'_1=l'_2$. Then $ u_i\leq_{lex}u_{c}$. Since $u_i\neq u_{c}$, there exist $a,b\in [n]$ such that $x_a$ divides $u_{c}$ while it does not divide $u_i$, and $x_b$ divides $u_i$ while it does not divide $u_{c}$. Let $a,b$ be the smallest integers with these properties. Then $x_{b}<_{lex}x_a$. Therefore $\left(u_{c}/x_a\right)x_b\leq_{lex}u_{c}$. On the other hand since $l_1=l_2$ and $l'_1=l'_2$ and since $i<j<k$ for all $i\in N_1$, $j\in N_2$ and $k\in N_3$, it is easily seen that $a\in N_i$ if and only if $b\in N_i$. Thus $\left(u_{c}/x_a\right)x_b\in I_{\left(d-l_1\right)l'_1}\subseteq I$.
	
Assume now that $l'_1<l'_2$. It follows that there exists $a\in N_2$ such that $x_a$ divides $u_{c}$ but not $u_{i}$. In addition, $l_1-l'_1>l_2-l'_2$ implies that there exists $b\in N_3$ such that $x_b$ divides $u_{i}$ but not $u_{c}$. Then $\left(u_{c}/x_a\right)x_b\in I_{\left(d-l_2\right)\left(l'_2-1\right)}\subseteq I$. By the given ordering  we have $\left(u_{c}/x_a\right)x_b<u_{c}$.
	
Suppose that $l_1<l_2$. Then $d-l_1>d-l_2$ implies that there exists $b\in N_1$ such that $x_b$ divides $u_{i}$ and not $u_{c}$. In the case that $l'_1=l'_2$ we have $l_1-l'_1<l_2-l'_2$. It follows that there exists $a\in N_3$ such that $x_a$ divides $u_{c}$ and not $u_{i}$. Then $\left(u_{c}/x_a\right)x_b\in I_{\left(d-\left(l_2-1\right)\right)l'_2}\subseteq I$ and  $\left(u_{c}/x_a\right)x_b<u_{c}$. In the case that $l'_1<l'_2$ there exists $a\in N_2$ such that $x_a$ divides $u_{c}$ and it does not divide $u_{i}$. Then $\left(u_{c}/x_a\right)x_b\in I_{\left(d-\left(l_2-1\right)\right)\left(l'_2-1\right)}\subseteq I$ and $\left(u_{c}/x_a\right)x_b<u_{c}$. Finally, in the case that $l'_1>l'_2$ we have $l_1-l'_1<l_2-l'_2$. It follows that there exists $a\in N_3$ such that $x_a$ divides $u_{c}$ and it does not divide $u_{i}$. Then $\left(u_{c}/x_a\right)x_b\in I_{\left(d-\left(l_2-1\right)\right)l'_2}\subseteq I$ and $\left(u_{c}/x_a\right)x_b<u_{c}$.
\end{proof}

\begin{prop}\label{clique}
Let $\mathcal{C}$ be a $d$-uniform clutter on the vertex set $[n]$, $d\geq 2$, with $\mathcal{C}= \mathcal{C}_1 \cup \mathcal{C}_2$ such that $V\left(\mathcal{C}_1\right)\cap V\left(\mathcal{C}_2\right)$ is a clique in both $\mathcal{C}_1$ and $\mathcal{C}_2$. Then   $I\left(\bar{\mathcal{C}}\right)$ has linear quotients if and only if $I\left(\bar{\mathcal{C}}_1 \right)$ and  $I\left(\bar{\mathcal{C}}_2 \right)$ have linear quotients. 
 \end{prop}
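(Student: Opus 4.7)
The plan is to analyze the generators of $I(\bar{\mathcal{C}})$ with respect to the decomposition of the vertex set. Set $V_i = V(\mathcal{C}_i)$, $W = V_1 \cap V_2$, $N_1 = V_1 \setminus W$, $N_2 = V_2 \setminus W$; one may assume $[n] = V_1 \cup V_2$. The structural observation is that $\mathcal{G}(I(\bar{\mathcal{C}}))$ is the disjoint union of $\mathcal{G}(I(\bar{\mathcal{C}}_1))$ (each of whose elements meets $N_1$, since $W$ is a clique in $\mathcal{C}_1$), $\mathcal{G}(I(\bar{\mathcal{C}}_2))$ (meeting $N_2$ by symmetry), and a third family $J$ consisting of the $d$-subsets $F \subseteq [n]$ with $F \cap N_1 \neq \emptyset \neq F \cap N_2$; the family $J$ lies automatically in $\bar{\mathcal{C}}$ and its ideal has linear quotients by Lemma~\ref{intermediate}.

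For the forward direction, given a linear-quotients order $u_1 < \cdots < u_m$ of $I(\bar{\mathcal{C}})$, I would consider the subsequence $v_1 < \cdots < v_{r_1}$ of those $u_l$ in $\mathcal{G}(I(\bar{\mathcal{C}}_1))$; the same argument on the $V_2$-side handles $I(\bar{\mathcal{C}}_2)$. Fix $v_j = u_l$ and a linear generator $x_a$ of the colon $(u_1, \ldots, u_{l-1}) \colon u_l$. By Proposition~\ref{colon ideal} there is $k < l$ with $u_k = (u_l/x_b)\, x_a$ for some $x_b \mid u_l$; when $a \in V_1$ and $\supp(u_l) \subseteq V_1$, this forces $\supp(u_k) \subseteq V_1$, so $u_k$ is one of the $v$'s. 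Since each standard generator $v_k/\gcd(v_k, v_j)$ of the induced colon has support in $V_1$, it must be divisible by some linear generator $x_a$ of the $S$-colon with $a \in V_1$, and by the previous sentence every such $x_a$ sits in the induced colon. Thus the induced colon is generated by $V_1$-variables.

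For the backward direction, I would take linear-quotients orders $v_1 < \cdots < v_{r_1}$ for $I(\bar{\mathcal{C}}_1)$ and $w_1 < \cdots < w_{r_2}$ for $I(\bar{\mathcal{C}}_2)$, a linear-quotients order $t_1 < \cdots < t_{r_3}$ for $J$ from Lemma~\ref{intermediate}, and concatenate them as $t_\bullet < v_\bullet < w_\bullet$. The $t$-steps are immediate from Lemma~\ref{intermediate}. At a $v_j$-step the colon decomposes into a $(v\colon v)$-part (generated by $V_1$-variables by hypothesis) and a $(t\colon v)$-part; each standard generator $t_k/\gcd(t_k, v_j)$ inherits from $t_k$ some $N_2$-variable $x_a$ (since $v_j$ has no $N_2$-vertex), so it suffices to show $x_a$ itself belongs to the colon. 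For this, pick $b \in N_1 \cap \supp(v_j)$ (available because $v_j \in \mathcal{G}(I(\bar{\mathcal{C}}_1))$ forces $\supp(v_j) \not\subseteq W$); any $d$-subset of $\{a\} \cup \supp(v_j)$ containing both $a$ and $b$ is a $J$-generator dividing $x_a v_j$, so $x_a v_j \in (t_1, \ldots, t_{r_3})$ and $x_a$ enters the colon. The $w_j$-step is symmetric; the additional $(v\colon w)$-part is absorbed by $N_1$-variables, which enter the colon by the same trick with the roles of $N_1$ and $N_2$ swapped.

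The main obstacle is the choice of ordering in the backward direction: a naive $v_\bullet < w_\bullet$ order without $J$ fails, because $(v\colon w)$ produces $N_1$-variable multiples that cannot be rewritten as linear forms unless the $J$-generators are already present. Placing $J$ first simultaneously makes all $N_1$- and $N_2$-variables available at the moment they are needed, and the verification then becomes routine bookkeeping. The forward direction rests on one clean fact: the ``swap'' $u_k = (u_l/x_b)\, x_a$ keeps $u_k$ inside the subsequence of $\mathcal{G}(I(\bar{\mathcal{C}}_i))$-generators exactly when $a \in V_i$ and $\supp(u_l) \subseteq V_i$, so the linear-quotients property restricts cleanly.
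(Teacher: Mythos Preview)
Your proposal is correct and follows essentially the same route as the paper: the same partition of $\mathcal{G}(I(\bar{\mathcal{C}}))$ into $\mathcal{G}(I(\bar{\mathcal{C}}_1))$, $\mathcal{G}(I(\bar{\mathcal{C}}_2))$, and the ``mixed'' family $J$ handled by Lemma~\ref{intermediate}; the same concatenated order $J < \bar{\mathcal{C}}_1 < \bar{\mathcal{C}}_2$ for the backward direction, with the identical swap argument showing that every $N_2$-variable (resp.\ $N_1$-variable) lies in the relevant colon; and the same subsequence-restriction argument for the forward direction, resting on the observation that a degree-one swap $u_k = (u_l/x_b)x_a$ with $a \in V_1$ and $\supp(u_l) \subseteq V_1$ keeps $u_k$ inside $\mathcal{G}(I(\bar{\mathcal{C}}_1))$.
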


\begin{proof}
Let $N_1=V\left(\mathcal{C}_1\right)\setminus V\left(\mathcal{C}_2\right)$,  $N_2=V\left(\mathcal{C}_2\right)\setminus V\left(\mathcal{C}_1\right)$, and $N_3=V\left(\mathcal{C}_1\right)\sect V\left(\mathcal{C}_2\right)$. Let $T=\{F\subseteq [n] \colon\; |F|=d, F\cap N_1\neq \varnothing \neq F\cap N_2\}$. Since $\mathcal{C}= \mathcal{C}_1 \cup \mathcal{C}_2$, we have $T\subseteq \bar{\mathcal{C}}$. Since $V\left(\mathcal{C}_1\right)\cap V\left(\mathcal{C}_2\right)$ is a clique in $\mathcal{C}_1$ and $\mathcal{C}_2$, for any $F\subseteq N_1\cup N_3$ with $F\notin \mathcal{C}_1$ we have $F\in \bar{\mathcal{C}}$. Similarly, for any $F\subseteq N_2\cup N_3$ with $F\notin \mathcal{C}_2$ we have $F\in \bar{\mathcal{C}}$. Thus $T\cup\bar{\mathcal{C}}_1 \cup \bar{\mathcal{C}}_2 \subseteq \bar{\mathcal{C}}$.
Conversely, suppose that $F\in \bar{\mathcal{C}}$. By assumption $F\not\subseteq N_3$. In the case that $F\subseteq N_1\cup N_3$ we have $F\in \bar{\mathcal{C}}_1$, and in the case that $F\subseteq N_2\cup N_3$ we have $F\in \bar{\mathcal{C}}_2$. Finally, in the case that $F\cap N_1\neq \varnothing \neq F\cap N_2$ we have $F\in T$. Consequently, $\bar{\mathcal{C}}=T\cup\bar{\mathcal{C}}_1 \cup \bar{\mathcal{C}}_2$. 
	
Since $V\left(\mathcal{C}_1\right)\cap V\left(\mathcal{C}_2\right)$ is a clique in $\mathcal{C}_1$ and $\mathcal{C}_2$ and $V\left(\bar{\mathcal{C}}_1 \right)=N_1\cup N_3$, $V\left(\bar{\mathcal{C}}_2 \right)=N_2\cup N_3$, we have $\mathcal{G}\left(I\left(\bar{\mathcal{C}}_1 \right)\right)\sect \mathcal{G}\left(I\left(\bar{\mathcal{C}}_2 \right)\right)=\varnothing$. 
Therefore $\mathcal{G}\left(I\left(\bar{\mathcal{C}}\right)\right)=\mathcal{G}\left(I\left(T\right)\right)\cup \mathcal{G}\left(I\left(\bar{\mathcal{C}}_1 \right)\right)\cup \mathcal{G}\left(I\left(\bar{\mathcal{C}}_2 \right)\right)$  is a disjoint union of the sets, where $I\left(T\right)$ is an ideal generated by monomials $\xb_F$  with $F\in T$. By Lemma~\ref{intermediate}, $I(T)$ has linear quotients. 

\medskip
First assume that $I\left(\bar{\mathcal{C}}_1 \right)$ and  $I\left(\bar{\mathcal{C}}_2 \right)$ have linear quotients. We show that    $I\left(\bar{\mathcal{C}}\right)$ has linear quotients.

We give the following  monomial ordering $<$ for the generators of $I:=I\left(\bar{\mathcal{C}}\right)$ such that the ideal $I$ has linear quotients with respect to this ordering:
\begin{itemize}
\item[(i)] for monomials $u\in \mathcal{G}\left(I(T)\right)$, consider the given order in Lemma~\ref{intermediate};
\item[(ii)] for  monomials $u\in \mathcal{G}\left(I(T)\right)$ and $v\in \mathcal{G}\left(I\left(\bar{\mathcal{C}}_1 \right)\right)\cup \mathcal{G}\left(I\left(\bar{\mathcal{C}}_2 \right)\right)$ let $u<v$; 
\item[(iii)] for  monomials $u\in \mathcal{G}\left(I\left(\bar{\mathcal{C}}_1 \right)\right)$ and $v\in \mathcal{G}\left(I\left(\bar{\mathcal{C}}_2 \right)\right)$ let $u<v$; 
\item[(iv)]  for  monomials $u\in \mathcal{G}\left(I\left(\bar{\mathcal{C}}_1 \right)\right)$ or $u\in \mathcal{G}\left(I\left(\bar{\mathcal{C}}_2 \right)\right)$, consider the given order of $I\left(\bar{\mathcal{C}}_1 \right)$ and $I\left(\bar{\mathcal{C}}_2 \right)$, by which these ideals have linear quotients.
\end{itemize}
	
Suppose that $I$ is generated by  monomials $u_1<\cdots<u_m$. We show that the colon ideal $\left(u_1, \ldots, u_{c-1}\right)\colon u_{c}$ is generated by variables, for   $2\leq c\leq m$. To do this, 
we prove that for any  $u_i$, $i\leq c-1$, there exists $u_j$,  $j\leq c-1$, such that $u_j/\gcd\left(u_j,u_{c}\right)$ is of degree one and it divides   $u_i/ \gcd\left(u_i, u_{c}\right)$. 

Suppose first that $u_{c}$ belongs to $I(T)$. So $u_i\in I(T)$. Hence by Lemma~\ref{intermediate} we get the desired result.

Suppose that $u_{c}\in I\left(\bar{\mathcal{C}}_1 \right)$. So $u_{c}=\xb_F$, where $F\subseteq N_1\cup N_3$. If $u_i \in I\left(\bar{\mathcal{C}}_1 \right)$ we are done, because $I\left(\bar{\mathcal{C}}_1 \right)$ has linear quotients. It is enough to show that $I\left(T\right)\colon u_{c}$ is generated by some variables. We claim that 
$$I\left(T\right)\colon u_{c}=\left(x_b\colon \; b \in N_2\right).$$
 Since $V\left(\mathcal{C}_1\right)\cap V\left(\mathcal{C}_2\right)$ is a clique in $\mathcal{C}_1$,  we have $F\cap N_1\neq \varnothing$.   In case $|F\cap N_1|\geq 2$, let $a\in F\cap N_1$, and in case $|F\cap N_1|=1$, since $|F|=d\geq 2$ and hence  $F\cap N_3\neq \varnothing$, let $a\in F\cap N_3$. 
 Therefore, for any $b\in N_2$,  $u_{j}:=\left(u_{c} /x_a\right)x_b \in I_{\left(d-l\right)1}\subseteq I\left(T\right)$ for some $l$, and so $u_{j}<u_{c}$. Moreover, $u_{j}/\gcd\left(u_{j}, u_{c}\right)=x_b$. So $\left(x_b\colon \; b\in N_2\right)\subseteq I\left(T\right) \colon u_{c}$. Conversely, for each $u\in \mathcal{G}\left(I\left(T\right)\right)$, there exists $x_b$ with $b\in N_2$ such that $x_b$ divides $u$. This implies that $x_b$ divides $u/\gcd\left(u,u_{c}\right)$, since $u_{c}=\xb_F$ with  $F\sect N_2=\varnothing$. Thus  $I\left(T\right) \colon u_{c}=\left(x_b\colon \; b\in N_2\right)$.
	
Finally, suppose that $u_{c}\in I\left(\bar{\mathcal{C}}_2 \right)$. So $u_{c}=\xb_F$, where $F\subseteq N_2\cup N_3$. If $u_i \in I\left(\bar{\mathcal{C}}_2 \right)$ we are done because $I\left(\bar{\mathcal{C}}_2 \right)$ has linear quotients. It is enough to show that the ideal $\left(I\left(T\right)+I\left(\bar{\mathcal{C}}_1 \right)\right)\colon u_{c}$ is generated by some  variables. We claim that 
$$\left(I\left(T\right)+I\left(\bar{\mathcal{C}}_1 \right)\right) \colon u_{c}=\left(x_b\colon \; b\in N_1\right).$$
Similar to the above argument we have   $\left(x_b\colon \; b\in N_1\right)\subseteq \left(I\left(T\right)+I\left(\bar{\mathcal{C}}_1 \right)\right) \colon u_{c}$. Conversely, for each $u\in \mathcal{G}\left(I\left(T\right)+I\left(\bar{\mathcal{C}}_1 \right)\right)$, $u=\xb_G$ with $G\sect N_1\neq \varnothing$, because $N_3$ is a clique in $\mathcal{C}_1$. Thus there exists $x_b$ with $b\in N_1$ such that $x_b$ divides $u$. This implies that $x_b$ divides $u/\gcd\left(u,u_{c}\right)$, because $u_{c}=\xb_F$ with  $F\sect N_1=\varnothing$. It follows that $\left(I\left(T\right)+I\left(\bar{\mathcal{C}}_1 \right)\right) \colon u_{c}=\left(x_b \colon \; b\in N_1\right)$. This completes the proof that $I(\bar{\mathcal{C}})$ has linear quotients.

Now suppose   $I\left(\bar{\mathcal{C}}\right)$ has linear quotients. We prove that the both ideals $I\left(\bar{\mathcal{C}}_1 \right)$ and  $I\left(\bar{\mathcal{C}}_2 \right)$ have linear quotients. 
Suppose $I$ is minimally generated by $\mathbf{x}_{F_1},\ldots,\mathbf{x}_{F_m}$ and suppose the given order provides linear quotients for $I$. 
Let $\mathcal{G}\left(I\left(\bar{\mathcal{C}_1}\right)\right)=\{\mathbf{x}_{F_{i_1}},\ldots,\mathbf{x}_{F_{i_l}}\}$ with $1\leq i_1<\cdots<i_l\leq m$. We show that $I(\bar{\C_1})$ has linear quotients with the given order of the generators. 
To do this, we prove that the colon ideal $J_{\C_1}:=(\mathbf{x}_{F_{i_1}},\ldots,\mathbf{x}_{F_{i_{k-1}}})\colon \mathbf{x}_{F_{i_k}}$  is generated by variables  for all $i_k$ with $i_1<i_k\leq i_l$. 
Consider a (not necessarily minimal) generator  $\mathbf{x}_{F_{i_s}}/\gcd(\mathbf{x}_{F_{i_s}},\mathbf{x}_{F_{i_k}})$ of $J_{\C_1}$. Since $\mathbf{x}_{F_{i_s}}/\gcd(\mathbf{x}_{F_{i_s}},\mathbf{x}_{F_{i_k}})$ belongs to the colon  ideal $J_{\C}=(\mathbf{x}_{F_{1}}, \mathbf{x}_{F_{2}}, \ldots,\mathbf{x}_{F_{i_{k}-1}}):\mathbf{x}_{F_{i_k}}$ and since by assumption $J_{\C}$ is generated by variables, it follows that there exists $1\leq r<i_k$ such that $\mathbf{x}_{F_{r}}/\gcd(\mathbf{x}_{F_{r}},\mathbf{x}_{F_{i_k}})$ is of degree $1$ and it divides $\mathbf{x}_{F_{i_s}}/\gcd(\mathbf{x}_{F_{i_s}},\mathbf{x}_{F_{i_k}})$. We show that $\mathbf{x}_{F_{r}} \in \mathcal{G}\left(I\left(\bar{\mathcal{C}_1}\right)\right)$ which proves the assertion.  It follows from  $$\frac{\mathbf{x}_{F_{r}}}{\gcd(\mathbf{x}_{F_{r}},\mathbf{x}_{F_{i_k}})}\big| \frac{\mathbf{x}_{F_{i_s}}}{\gcd(\mathbf{x}_{F_{i_s}},\mathbf{x}_{F_{i_k}})}$$
that $F_r\setminus F_{i_k}\subseteq F_{i_s}\setminus F_{i_k}$. 
Since $F_{i_s},  F_{i_k}$ belong to $\bar{\C_1}$ we have $F_{i_s}, F_{i_k}\subseteq V(\C_1)$. Therefore $F_r\setminus F_{i_k}, F_r\cap F_{i_k}\subseteq V(\C_1)$. This implies that $F_r\subseteq V(\C_1)$. Hence $F_r\notin T$.  If $F_r\subseteq V(\C_2)$, then since $V(\C_1)\cap V(\C_2)$ is a clique in $\C_1$, $\C_2$, we have $F_r\in \C_1\cap \C_2$ and so $\mathbf{x}_{F_r}\notin I(\bar{\C})$, a contradiction. Thus $F_r\not\subseteq V(\C_2)$. It follows that 
$\mathbf{x}_{F_r}\notin  \mathcal{G}\left(I\left(T\right)\right)\cup \mathcal{G}\left(I\left(\bar{\mathcal{C}_2}\right)\right)$. Consequently, $\mathbf{x}_{F_r}\in  \mathcal{G}\left(I\left(\bar{\mathcal{C}_1}\right)\right)$.
\end{proof}

Now we have all tools needed to prove the main theorem of this section:
\begin{thm}\label{stronglinq}
Let $\mathcal{C}$ be a decomposable clutter on the vertex set $[n]$. Then the ideal $I\left(\bar{\mathcal{C}}\right)$ has linear quotients.
\end{thm}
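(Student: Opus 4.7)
The plan is to induct on the recursive structure in the definition of decomposable clutters. Since the class $\mathfrak{C'}_d$ is generated by the three rules (i), (ii), (iii) of the definition, it suffices to verify that the property ``$I(\bar{\mathcal{C}})$ has linear quotients'' is preserved under each of them. Thus we set up a structural induction on the number of steps needed to build $\mathcal{C}$ from the base case $\mathcal{C}_{n,d}$.

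For the base case, when $\mathcal{C} = \mathcal{C}_{n,d}$ we have $\bar{\mathcal{C}} = \varnothing$, so $I(\bar{\mathcal{C}}) = 0$ and the linear quotients property holds vacuously. For the inductive step corresponding to rule (ii), suppose $\mathcal{C} = \mathcal{C}_1 \cup \mathcal{C}_2$ where $\mathcal{C}_1$ and $\mathcal{C}_2$ are decomposable clutters for which $V(\mathcal{C}_1)\cap V(\mathcal{C}_2)$ is a clique in both, and $V(\mathcal{C}_i) \not\subseteq V(\mathcal{C}_j)$ for $\{i,j\}=\{1,2\}$. By the induction hypothesis, $I(\bar{\mathcal{C}}_1)$ and $I(\bar{\mathcal{C}}_2)$ both have linear quotients; then Proposition~\ref{clique} applied to the decomposition $\mathcal{C} = \mathcal{C}_1 \cup \mathcal{C}_2$ yields that $I(\bar{\mathcal{C}})$ has linear quotients as well.

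For the inductive step corresponding to rule (iii), suppose $\mathcal{C}$ is a simplicial subclutter of a decomposable clutter $\mathcal{D}$. By the induction hypothesis, $I(\bar{\mathcal{D}})$ has linear quotients, and then Theorem~\ref{simplicial subclutter has linear quotients-cor}(ii) directly provides that $I(\bar{\mathcal{C}})$ has linear quotients. Combining these three cases closes the induction and proves the theorem.

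The main content of the argument has already been distilled into Proposition~\ref{clique} (the gluing along a common clique) and the cited theorem from \cite{MA} (passage to simplicial subclutters); once both are in hand, the present theorem is essentially a bookkeeping exercise matching each clause of the recursive definition to the appropriate preservation lemma. So no new technical obstacle is expected here; the only thing to be careful about is to phrase the induction on the construction tree of $\mathcal{C}$ rather than, say, on $|V(\mathcal{C})|$ or $|\mathcal{C}|$, since a single decomposable clutter may admit several different constructions.
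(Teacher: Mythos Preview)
Your proposal is correct and follows essentially the same approach as the paper: both handle the three clauses of the definition separately, invoking Proposition~\ref{clique} for the clique-sum step and Theorem~\ref{simplicial subclutter has linear quotients-cor}(ii) for the simplicial-subclutter step. The only cosmetic difference is the induction parameter---you use structural induction on the construction tree, whereas the paper inducts on the number of vertices in case~(ii) and on $|\mathcal{G}(I(\bar{\mathcal{C}}))|$ in case~(iii)---but the content is identical.
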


\begin{proof}
If  $\mathcal{C}=\mathcal{C}_{n,d}$, then $I(\bar{\mathcal{C}})=0$ and there is nothing to prove. So assume that $\mathcal{C} \neq \mathcal{C}_{n,d}$. If $\mathcal{C}$ is  of the form $\mathcal{C}= \mathcal{C}_1 \cup \mathcal{C}_{2}$, where $\mathcal{C}_1, \mathcal{C}_2$ are decomposable clutters on the vertex set $V(\mathcal{C}_1)$, $V(\mathcal{C}_2)$, respectively, with $V(\mathcal{C}_1) \nsubseteq V(\mathcal{C}_2)$, $V(\mathcal{C}_2) \nsubseteq V(\mathcal{C}_1)$ and $V\left(\mathcal{C}_1 \right)\cap V\left(\mathcal{C}_2 \right)$ is a clique in both $\mathcal{C}_1$ and $\mathcal{C}_2$, then the ideal $I\left(\bar{\mathcal{C}}\right)$ has linear quotients by using induction on the number of vertices and Proposition~\ref{clique}. Suppose now that $\mathcal{C}$ is  not of this form. Then $\mathcal{C}$ is a simplicial subclutter of a decomposable $d$-uniform clutter $\C'$. By induction on $|\mathcal{G}(I(\bar{\C}))|$, we conclude that the ideal $I(\bar{\C}')$ has linear quotients. Applying Theorem~\ref{simplicial subclutter has linear quotients-cor}(ii), we conclude that the ideal $I\left(\bar{\mathcal{C}}\right)$ has linear quotients.
\end{proof}

As a consequence of Theorem~\ref{stronglinq} we have the following result.

\begin{cor}\label{shellability of simp. of decomposables}
Let $\mathcal{C}$ be a decomposable clutter. Then $\Delta(\mathcal{C})^\vee$ is shellable.
\end{cor}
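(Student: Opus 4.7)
The plan is to recognize that this corollary follows essentially by combining Theorem~\ref{stronglinq} with the Herzog--Hibi--Zheng characterization of shellability via linear quotients of the Alexander-dual ideal, which is already recorded in the preliminaries.

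First I would invoke Theorem~\ref{stronglinq}: because $\mathcal{C}$ is decomposable, the ideal $I(\bar{\mathcal{C}})$ has linear quotients with respect to some ordering $u_1, \ldots, u_m$ of its minimal monomial generators. By the very definition of the clique complex $\Delta(\mathcal{C})$ given in the preliminaries, we have $I_{\Delta(\mathcal{C})} = I(\bar{\mathcal{C}})$, so the same ordering provides linear quotients for the Stanley--Reisner ideal $I_{\Delta(\mathcal{C})}$. Moreover this ideal is equigenerated in degree $d$, because $\mathcal{C}$ is $d$-uniform and so every minimal non-face of $\Delta(\mathcal{C})$ has cardinality exactly $d$.

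Next, I would translate linear quotients on $I_{\Delta(\mathcal{C})}$ into shellability of $\Delta(\mathcal{C})^\vee$. Writing $u_k = \xb_{G_k}$ with $G_k \subseteq [n]$ of cardinality $d$, each $G_k$ is a minimal non-face of $\Delta(\mathcal{C})$; hence its complement $F_k = [n] \setminus G_k$ is a facet of $\Delta(\mathcal{C})^\vee$, and $\{F_1,\ldots,F_m\} = \mathcal{F}(\Delta(\mathcal{C})^\vee)$. Applying Proposition~\ref{linear quotient and shellability} to the simplicial complex $\Delta(\mathcal{C})^\vee$ (whose Alexander dual is $(\Delta(\mathcal{C})^\vee)^\vee = \Delta(\mathcal{C})$ with Stanley--Reisner ideal $I_{\Delta(\mathcal{C})}$), the linear-quotients ordering $u_1,\ldots,u_m$ on the generators of $I_{\Delta(\mathcal{C})}$ corresponds precisely to a shelling order $F_1,\ldots,F_m$ of $\Delta(\mathcal{C})^\vee$.

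There is no real obstacle here: the corollary is a direct consequence of the main theorem of the section together with the standard equivalence between linear quotients on $I_\Delta$ and shellability of $\Delta^\vee$ for equigenerated squarefree monomial ideals, which is already established in the preliminary section. The only point that needs care is making explicit that $I_{\Delta(\mathcal{C})} = I(\bar{\mathcal{C}})$ is generated in a single degree, so that the linear-quotients hypothesis of Proposition~\ref{linear quotient and shellability} applies and yields a shelling order rather than merely an ordering with the right colon structure.
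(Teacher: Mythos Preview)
Your argument is correct and follows exactly the same route as the paper: invoke Theorem~\ref{stronglinq} to obtain linear quotients for $I_{\Delta(\mathcal{C})}=I(\bar{\mathcal{C}})$, then apply Proposition~\ref{linear quotient and shellability} to conclude that $\Delta(\mathcal{C})^\vee$ is shellable. Your additional remarks on equigeneration and the explicit identification of facets are fine elaborations, but the paper's proof is simply these two steps stated in two sentences.
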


\begin{proof}
By Theorem~\ref{stronglinq} the ideal $I_{\Delta(\mathcal{C}) }= I( \bar{\mathcal{C}} )$ has  linear quotients. It follows from Proposition~\ref{linear quotient and shellability} that $\Delta(\mathcal{C})^\vee$ is shellable.
\end{proof}

Example~\ref{lin quo not decomposable} shows that the class of decomposable clutters is not equivalent to the class of uniform clutters whose associated ideals have linear quotients. Note that for a uniform clutter $\mathcal{C}$ and  a simplicial element $e$ of $\mathcal{C}$, if $A\subseteq \{F\in \mathcal{C}\colon e\subset F\}$, then $e\in\mathrm{Simp}(\mathcal{C}\setminus A)$.
\begin{ex}\label{lin quo not decomposable}
Let 
$$
{\mathcal C} =  \{ \{1,2,5\},\{1,3, 5\},\{1,4,5\},\{2, 3, 4\}\}
$$
be a $3$-uniform clutter on $[5]$. 
It is easy to check that the ideal $I(\bar{\mathcal{C}})$ has linear quotients with respect to the following order:
$$
I(\bar{\mathcal C})=(x_1x_2x_3, x_1x_2x_4, x_1x_3x_4, x_3x_4x_5, x_2x_4x_5, x_2x_3x_5). 
$$
Suppose $\mathcal{C}$ is a simplicial subclutter of a decomposable clutter ${\mathcal D}$. Since $\mathcal{C}\neq {\mathcal D}$, there exists a simplicial sequence $e_1, \ldots, e_t$ of ${\mathcal D}$ and  non-empty subsets $A_1,\ldots, A_t$ as defined in Subsection~\ref{simp sub}, such that $\mathcal{C}={\mathcal D}\setminus A_1\setminus\cdots\setminus A_t$.  Then for any  $F\in  A_t$ we have $e_t\in\mathrm{Simp}(\mathcal{C}\cup\{F\})$. Note that $e_t\neq \{1,5\}$ because $\{1,5\}\notin\mathrm{Simp}(\mathcal{C})$. 

Since all $2$-subsets of $[5]$ are maximal subcircuits of $\mathcal{C}$, by symmetry, we may assume that either $e_t=\{1,2\}$ or $e_t=\{2,3\}$. If $e_t=\{1,2\}$, then $A_t\subseteq\{\{1,2,3\},\{1,2,4\}\}$. But $\{1,2\}$ is not simplicial in either of $\mathcal{C}\cup\{\{1,2,3\}\}$ or  $\mathcal{C}\cup\{\{1,2,4\}\}$. Therefore $A_t=\varnothing$, a contradiction. Hence $e_t\neq \{1,2\}$. With the similar argument, we also see that $e_t\neq \{2,3\}$. It follows that $\mathcal{C}$ is not a simplicial subclutter of $\mathcal{D}$. 

Now, assume that  ${\mathcal C}={\mathcal C_1} \cup {\mathcal C_2}$ such that ${\mathcal C_1}$ and ${\mathcal C_2}$ are decomposable clutters and $V(\mathcal{C}_1)\cap V(\mathcal{C}_2)$ is a clique in both  ${\mathcal C_1}$ and ${\mathcal C_2}$.  Let $i\in V({\mathcal C_1})\setminus V({\mathcal C_2})$ and $j\in V({\mathcal C_2})\setminus V({\mathcal C_1})$. Then there is no circuit in $\mathcal{C}$ containing $\{i,j\}$. But,  all $2$-subsets of $[5]$ are maximal subcircuits of $\mathcal{C}$, a contradiction. Therefore ${\mathcal C}\neq{\mathcal C_1} \cup {\mathcal C_2}$. Consequently $\mathcal{C}$ is not a decomposable clutter. 
\end{ex}


\section{Simon's Conjecture and chordality of decomposable clutters} \label{Simon}

In this section we study the relation between  extendable shellability  of  skeletons of a simplex and chordality of decomposable clutters.

\begin{defn}[{see \cite{Klee}}] \label{Definition of decomposable}
A Simplicial complex $\Delta$ is called \textit{extendably shellable}, if any shelling
of a subcomplex $\Gamma$ of $\Delta$ with $\mathcal{F}(\Gamma)\subseteq \mathcal{F}(\Delta)$ can be extended to a shelling of $\Delta$.
\end{defn}

As mentioned in the introduction, few classes of extendably shellable simplicial complexes are known. The following conjecture by R. S. Simon offers a big class of extendably shellable complexes.

\begin{conj}[{Simon's Conjecture, \cite[Conjecture~4.2.1]{Simon}}] \label{Simon's conjecture}
Every $d$-skeleton of a simplex is extendably shellable.
\end{conj}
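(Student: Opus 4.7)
The plan is to translate Simon's conjecture into the statement that every decomposable clutter is chordal (the stronger conjecture alluded to in Section~\ref{Simon}), and then to establish that implication as the main step. In detail, let $\Delta=\Delta^{(d)}$, whose facets are the $(d+1)$-subsets of $[n]$, and let $F_1,\ldots,F_r$ be a shelling of a subcomplex $\Gamma$ with $\mathcal{F}(\Gamma)\subseteq\mathcal{F}(\Delta)$. By Proposition~\ref{linear quotient and shellability}, extending this shelling to one of $\Delta$ is equivalent to extending the generator ordering $\xb_{\bar F_1},\ldots,\xb_{\bar F_r}$ of $I_{\Gamma^\vee}$ to a linear-quotient ordering of
\begin{equation*}
I_{\Delta^\vee}=\bigl(\xb_{\bar F}\colon\ F\in\tbinom{[n]}{d+1}\bigr),
\end{equation*}
which is the complete squarefree Veronese ideal in degree $n-d-1$.

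The main strategy has two parts. First, consider the $(d+1)$-uniform clutter $\mathcal{D}=\mathcal{C}_{n,d+1}\setminus\{F_1,\ldots,F_r\}$ of unshelled facets and prove that it belongs to $\mathfrak{C'}_{d+1}$, i.e., is decomposable. Second, apply the stronger conjecture that every decomposable clutter is chordal to produce a simplicial order of $\mathcal{D}$; this simplicial order unwinds, via the simplicial-subclutter chain used inside the proof of Theorem~\ref{simplicial subclutter has linear quotients-cor}, into a linear-quotient ordering of $I(\bar{\mathcal{D}})$ whose first colon ideal is generated by variables already present in the last colon ideal of the given partial shelling, so that the two orderings concatenate. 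A careful reading of the explicit construction in the proof of Proposition~\ref{clique} then shows that the concatenated sequence is in fact a linear-quotient ordering of the whole ideal $I_{\Delta^\vee}$, completing the extension.

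The principal obstacle is twofold: showing that $\mathcal{D}$ is decomposable and establishing the implication ``decomposable $\Rightarrow$ chordal''. Both are substantial. A partial shelling can cut out a very irregular subset of $\binom{[n]}{d+1}$, and it is not obvious how to exhibit the union-along-a-clique structure of clause~(ii) of the definition of decomposability or the simplicial-subclutter chain of clause~(iii). The decomposable-versus-chordal question is exactly the generalization of Simon's conjecture formulated (but not resolved) in Section~\ref{Simon}. The range $d\geq n-3$ bypasses both difficulties because $|\bar F|\leq 2$ forces $I_{\Delta^\vee}$ into degree at most two. The cases $d=n-1$ and $d=n-2$ are essentially immediate, and in the critical case $d=n-3$ the ideal $I_{\Delta^\vee}$ is the edge ideal of $K_n$; by Fr\"oberg's Theorem~\ref{Froberg}, the graph complementary to any partially shelled edge set is chordal, and the extension is completed by a greedy argument that at each stage deletes an edge of the chordal complement whose removal preserves chordality, until the complement is empty. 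In this graph regime decomposability and chordality coincide, and the obstacle evaporates.
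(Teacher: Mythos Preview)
The statement is a \emph{conjecture}; the paper does not prove it in general. What the paper supplies is the partial case $d\ge n-3$ (Corollary~\ref{partial answer to Simon's conjecture}) and the reduction showing that Conjecture~\ref{decomposables are chordal-conjecture} implies it (Corollary~\ref{Inverse of Simon's conjecture}). Your proposal is not a proof either, as you yourself concede when you note that the decomposable-implies-chordal step is ``formulated (but not resolved)''; so at best it is a restatement of the paper's strategy, not a proof to be compared against one.

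Your technical setup is also off. The relevant clutter is not $\mathcal{D}=\mathcal{C}_{n,d+1}\setminus\{F_1,\ldots,F_r\}$ on the facet side. The paper instead passes to the complements $e_i=\bar F_i$, which are $(n-d-1)$-subsets, and works with the $(n-d)$-uniform clutter $(\mathcal{C}_{n,n-d})_{\mathbf{e}}^r$ (Proposition~\ref{Simon implies that complete clutter is chordal} and Lemma~\ref{linear quotients and simplicial order}). Your $\mathcal{D}$ satisfies $I(\bar{\mathcal{D}})=(\xb_{F_1},\ldots,\xb_{F_r})$, the ideal of the shelled facets themselves, whereas what must be extended is the linear-quotient order on $(\xb_{\bar F_1},\ldots,\xb_{\bar F_r})$; chordality of your $\mathcal{D}$ has no bearing on that. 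Moreover, in the correct setup the clutter $(\mathcal{C}_{n,n-d})_{\mathbf{e}}^r$ is \emph{automatically} decomposable: it is a simplicial subclutter of the complete clutter by the very definition of a simplicial sequence, so clause~(iii) of the definition applies immediately. Thus your ``twofold obstacle'' collapses to a single one---chordality---which is precisely Conjecture~\ref{decomposables are chordal-conjecture}, and this is exactly the content of Corollary~\ref{Inverse of Simon's conjecture}. Your description of the range $d\ge n-3$ is correct in spirit and matches the paper's Corollary~\ref{partial answer to Simon's conjecture}: in the critical case the complements have size~$2$, Fr\"oberg's theorem yields a chordal graph, and one extends greedily by deleting edges at simplicial vertices.
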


In case $d = 2$  this conjecture was shown to be true by  Bj\"orner and Eriksson \cite{Bjorner}. In Corollary~\ref{partial answer to Simon's conjecture}, we  show that  Conjecture~\ref{Simon's conjecture} also holds   in case $d \geq n-3$. The main tool to prove this result  is Proposition~\ref{Simon implies that complete clutter is chordal}. We need the following lemma for the proof of this proposition.

\begin{lem}\cite[Remark~2.2(ii)]{NZ} \label{linear quotients and simplicial order}
Let $\mathcal{C}_{n,d}$ be the complete $d$-uniform clutter on the vertex set $[n]$ and let $\eb= e_1, e_2, \ldots, e_t$ be a sequence of $(d-1)$-subsets of $[n]$. The followings are equivalent:
\begin{itemize}
\item[\rm (i)] the sequence $\eb= e_1, e_2, \ldots, e_t$ is a simplicial sequence in $\mathcal{C}_{n,d}$;
\item[\rm (ii)] the ideal $I=\left(\xb_{e_1}, \ldots, \xb_{e_t}\right)$ has linear quotients,  with respect to the order $\xb_{e_1}, \ldots, \xb_{e_t}$.
\end{itemize}
\end{lem}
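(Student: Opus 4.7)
The plan is to translate linear quotients into a combinatorial statement about the sets $e_i$ using Proposition~\ref{colon ideal}, and to match it with the simplicial condition via a concrete description of the closed neighborhood $N:=N_{\mathcal{C}_{n,d}^{i-1}}[e_i]$. Since all generators are squarefree of the same degree, $\mathbf{x}_{e_j}/\gcd(\mathbf{x}_{e_j},\mathbf{x}_{e_i})=\mathbf{x}_{e_j\setminus e_i}$; hence the colon ideal $(\mathbf{x}_{e_1},\ldots,\mathbf{x}_{e_{i-1}})\colon\mathbf{x}_{e_i}$ is generated by variables precisely when, for each $j<i$, there exist some $a\in e_j\setminus e_i$ and some $j'<i$ with $e_{j'}\setminus e_i=\{a\}$, i.e.\ $e_{j'}=(e_i\setminus\{b\})\cup\{a\}$ for some $b\in e_i$. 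I will refer to an element $a\in[n]\setminus e_i$ for which some $l<i$ and $y\in e_i$ satisfy $e_l=(e_i\setminus\{y\})\cup\{a\}$ as a \emph{swap vertex}.

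Next I will describe $N$ explicitly: for $a\in[n]\setminus e_i$, the $d$-set $e_i\cup\{a\}$ lies in $\mathcal{C}_{n,d}^{i-1}$ iff no previously deleted $e_l$ ($l<i$) is one of its $(d-1)$-subsets, and the $(d-1)$-subsets of $e_i\cup\{a\}$ distinct from $e_i$ are exactly the sets $(e_i\setminus\{y\})\cup\{a\}$ with $y\in e_i$. Hence $a\notin N$ iff $a$ is a swap vertex; equivalently, the linear quotients condition at step $i$ says precisely that every $e_j$ with $j<i$ contains a vertex outside $N$.

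For (i)$\Rightarrow$(ii), fix $i$ and $j<i$ and argue by contradiction: if no $a\in e_j\setminus e_i$ is a swap vertex, then $e_j\setminus e_i\subseteq N\setminus e_i$ and thus $e_j\subseteq N$. Since $e_i\neq e_j$ and both have size $d-1$, $|e_i\cup e_j|\geq d$, so one may pick $c\in N\setminus e_j$ and set $F:=e_j\cup\{c\}$, a $d$-subset of the clique $N$. Hence $F\in\mathcal{C}_{n,d}^{i-1}$, but $e_j\subseteq F$ and $j<i$ force $F\notin\mathcal{C}_{n,d}^{i-1}$, a contradiction. Conversely, for (ii)$\Rightarrow$(i), if $N$ fails to be a clique then some $d$-subset $F\subseteq N$ lies outside $\mathcal{C}_{n,d}^{i-1}$ and so contains some $e_j$ with $j<i$; by linear quotients some $a\in e_j\setminus e_i$ is a swap vertex, so $a\notin N$, yet $a\in e_j\subseteq F\subseteq N$ — contradiction.

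The only real (and mild) obstacle is setting up the swap-vertex / closed-neighborhood dictionary correctly; once it is in place, both implications fall out of the same observation, namely that a previously deleted $e_j$ sitting inside $N$ would simultaneously have to belong and not belong to $\mathcal{C}_{n,d}^{i-1}$ through the clique property of $N$.
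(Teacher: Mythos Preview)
Your proof is correct. The paper does not supply its own argument for this lemma---it is quoted from \cite[Remark~2.2(ii)]{NZ}---so there is nothing in the paper to compare your approach against. The dictionary you set up (an element $a\in[n]\setminus e_i$ is a swap vertex if and only if $a\notin N_{\mathcal{C}_{n,d}^{\,i-1}}[e_i]$) is exactly the right translation, and both implications fall out of it immediately: in each direction a hypothetical $e_j\subseteq N$ with $j<i$ produces a $d$-subset of $N$ that is simultaneously forced into and out of $\mathcal{C}_{n,d}^{\,i-1}$.

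One small remark: you tacitly use that the $e_i$ are pairwise distinct (at ``since $e_i\neq e_j$'' in (i)$\Rightarrow$(ii), and implicitly when choosing $a\in e_j\setminus e_i$). This is harmless, since the phrase ``linear quotients with respect to the order $\mathbf{x}_{e_1},\ldots,\mathbf{x}_{e_t}$'' already presupposes distinct generators; in the (ii)$\Rightarrow$(i) direction the $e_j$ you locate inside $F\subseteq N$ is automatically different from $e_i$, because $e_i\subseteq F$ would force $F=e_i\cup\{a\}$ with $a\in N$ and hence $F\in\mathcal{C}_{n,d}^{\,i-1}$, contrary to the choice of $F$.
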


\begin{rem}\label{changing the position in lin quo}
Let $\mathcal{C}$ be a $d$-uniform clutter on $[n]$ and let $e$ be a $(d-1)$-subset of $[n]$ which is not a maximal subcircuit of  $\mathcal{C}$.  By definition of a simplicial element, we know that $e$ is simplicial over $\mathcal{C}$. However, $\mathcal{C}\setminus e=\mathcal{C}$.  Hence,  a simplicial sequence ${\bf e}=e_1,\ldots, e_t$ of the complete clutter $\mathcal{C}_{n,d}$ may contain some $e_i$ such that by removing $e_i$ from ${\bf e}$ we get a new simplicial sequence ${\bf e'}$ of $\mathcal{C}_{n,d}$  of length $t-1$ with  $(\mathcal{C}_{n,d})_{{\bf e'}}^{t-1}=(\mathcal{C}_{n,d})_{{\bf e}}^t$. Similarly, if if ${\bf e}=e_1,\ldots,e_t$ is a simplicial sequence of $\mathcal{C}_{n,d}$ and $e_{t+1}$ is a $(d-1)$-subset of $[n]$ with $e_{t+1}\notin \mathrm{SC}\left( (\mathcal{C}_{n,d})_{\bf e}^t \right)$, then adding $e_{t+1}$ to the end of ${\bf e}$ we get a simplicial sequence ${\bf e'}$  of $\mathcal{C}_{n,d}$ of length $t+1$ with  $(\mathcal{C}_{n,d})_{{\bf e'}}^{t+1}=(\mathcal{C}_{n,d})_{{\bf e}}^t$.

It is easy to check that a $(d-1)$-subset $e$ of $[n]$ is not a maximal subcircuit of $(\mathcal{C}_{n,d})_{{\bf e}}^t$ if and only if $\left(\xb_{e_1}, \ldots, \xb_{e_t}\right) \colon  \xb_{e}=(x_i\colon i\in [n]\setminus e)$. Applying this fact to Lemma~\ref{linear quotients and simplicial order}, one observes that given an equigenerated squarefree monomial ideal $I=\left(\xb_{e_1}, \ldots, \xb_{e_t}\right)$ which has linear quotients with respect to the given order, if  $\left(\xb_{e_1}, \ldots, \xb_{e_{j-1}}\right) \colon  \xb_{e_j}=(x_i\colon i\in [n]\setminus e_j)$ for some $j$, then   removing $ \xb_{e_j}$ form the ideal or changing its position to the $k$-th position with $k\geq j$ in the given order,   the resulting ideal with the new order still has linear quotients. 

It follows from \cite[Proposition~4.6]{BHYZ} that all  simplicial orders of $\mathcal{C}_{n,d}$ are of length  $l=\Sigma_{i=1}^{n-d+1} {{n-1-i}\choose{d-2}} = \Sigma_{i=0}^{n-2} {i \choose {d-2}}$. Therefore, if a squarefree monomial ideal $I$ generated in degree $d-1$ has at least $l$ minimal generators, the ideal $I$ has linear quotients if and only if there exist $l$ elements $u_1, \ldots , u_l$ in the minimal generating set of $I$ with the property that $J=(u_1, \ldots , u_l)$ has linear quotients with the given order and there is no minimal generator $u_j$ in $J$ such that $(u_1,\ldots, u_{j-1}) \colon u_j=(x_i\colon i\in [n]\setminus \supp(u))$. Then the ideal $I$ has linear quotients with respect to the order $u_1,\ldots, u_l, u_{l+1}, \ldots, u_r$, where $u_{l+1}, \ldots, u_r$ are the minimal generators of $I$ not belonging to $J$. 
\end{rem}

\begin{prop} \label{Simon implies that complete clutter is chordal}
Let $\mathcal{C}_{n,d}$ be the complete $d$-uniform clutter on the vertex set $[n]$. The followings are equivalent:
\begin{itemize}[topsep=8pt, partopsep=5pt]
\item[\rm (i)] The simplicial complex $\langle [n] \rangle^{(n-d)}$ is extendably shellable;
\item[\rm (ii)] If $\textbf{\rm{\textbf{e}}}= e_1, e_2, \ldots, e_t$ is a simplicial sequence in $\mathcal{C}_{n,d}$, then the clutter $\mathcal{C} = \left( \mathcal{C}_{n,d} \right)_{\textbf{\rm{\textbf{e}}}}^{t}$ is a chordal clutter;
\end{itemize}
\end{prop}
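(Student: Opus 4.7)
My approach will be to go through Alexander duality and turn simplicial sequences of $\mathcal{C}_{n,d}$ into shellings of subcomplexes of $\Delta:=\langle [n]\rangle^{(n-d)}$. The facets of $\Delta$ are exactly the $(n-d+1)$-subsets of $[n]$, i.e., the complements $\bar e$ of all $(d-1)$-subsets $e\subseteq [n]$; hence $I_{\Delta^\vee}$ is the squarefree monomial ideal generated by all degree-$(d-1)$ monomials in $S$. Combining Proposition~\ref{linear quotient and shellability} with Lemma~\ref{linear quotients and simplicial order} then produces the following dictionary: an ordering $\bar e_1,\ldots,\bar e_t$ of the facets of a subcomplex $\Gamma$ of $\Delta$ shells $\Gamma$ if and only if $\textbf{e}=e_1,\ldots,e_t$ is a simplicial sequence in $\mathcal{C}_{n,d}$. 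In particular, a full shelling of $\Delta$ corresponds to a simplicial sequence of length $N:=\binom{n}{d-1}$ using \emph{every} $(d-1)$-subset of $[n]$, and its terminal clutter $(\mathcal{C}_{n,d})_{\textbf{e}}^{N}$ is necessarily empty, because every $d$-circuit of $\mathcal{C}_{n,d}$ is removed as soon as one of its $(d-1)$-subsets is applied.

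For (i) $\Rightarrow$ (ii), starting from a simplicial sequence $\textbf{e}=e_1,\ldots,e_t$, I would form $\Gamma=\langle \bar e_1,\ldots,\bar e_t\rangle$, extend its shelling to a full shelling $\bar e_1,\ldots,\bar e_N$ of $\Delta$ using extendable shellability, and translate back to get a simplicial sequence $e_1,\ldots,e_N$ whose tail $e_{t+1},\ldots,e_N$ is a simplicial sequence in $\mathcal{C}:=(\mathcal{C}_{n,d})_{\textbf{e}}^{t}$ emptying it. I would then iteratively discard each $e_j$ in the tail that is not a maximal subcircuit of the intermediate clutter at its step; by Remark~\ref{changing the position in lin quo} each such deletion preserves both the simplicial sequence property and the empty terminal state. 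The surviving subsequence consists entirely of simplicial maximal subcircuits and empties $\mathcal{C}$, so it is a simplicial order of $\mathcal{C}$, witnessing chordality.

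For (ii) $\Rightarrow$ (i), given a shelling $\bar e_1,\ldots,\bar e_t$ of a subcomplex $\Gamma\subseteq\Delta$, the dictionary yields a simplicial sequence $\textbf{e}$ of $\mathcal{C}_{n,d}$. By (ii) the clutter $\mathcal{C}:=(\mathcal{C}_{n,d})_{\textbf{e}}^{t}$ is chordal, so it admits a simplicial order $e_{t+1},\ldots,e_{t+s}$ that empties it. I would then append, in any order, the $(d-1)$-subsets of $[n]$ not yet listed; at the moment each such $e$ is appended the current clutter is already empty, so $e$ is vacuously not a maximal subcircuit, and by Remark~\ref{changing the position in lin quo} the extended list of length $N$ remains a simplicial sequence in $\mathcal{C}_{n,d}$. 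Translating back, $\bar e_1,\ldots,\bar e_N$ is a shelling of $\Delta$ extending the original shelling of $\Gamma$.

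The principal obstacle is the length mismatch: a full shelling of $\Delta$ needs all $N=\binom{n}{d-1}$ facets, whereas a simplicial order of a clutter is typically shorter, since many $(d-1)$-subsets never become maximal subcircuits along the way. Remark~\ref{changing the position in lin quo} is the pivotal combinatorial device bridging this gap, allowing ``inert'' $(d-1)$-subsets to be freely inserted into or deleted from a simplicial sequence without disturbing its linear-quotient structure; this is what makes the extraction of a simplicial order in the forward direction and the padding into a full shelling in the reverse direction both possible.
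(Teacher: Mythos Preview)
Your proposal is correct and follows essentially the same route as the paper: both directions translate between shellings of subcomplexes of $\langle [n]\rangle^{(n-d)}$ and simplicial sequences in $\mathcal{C}_{n,d}$ via Proposition~\ref{linear quotient and shellability} and Lemma~\ref{linear quotients and simplicial order}, and both invoke Remark~\ref{changing the position in lin quo} to pad the sequence to full length in (ii)$\Rightarrow$(i). If anything, you are slightly more explicit than the paper in (i)$\Rightarrow$(ii), where you prune the tail down to a genuine simplicial order, whereas the paper simply asserts that a simplicial sequence emptying $\mathcal{C}$ witnesses chordality.
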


\begin{proof}
(i)$\Rightarrow$(ii): If $\mathcal{C} =\varnothing$, there is nothing to prove. Assume that $\mathcal{C} \neq \varnothing$.
Let $\Delta= \langle [n] \rangle^{(n-d)}$ and consider the subcomplex $\Delta' = \langle {[n]\setminus e_1}, \ldots, {[n]\setminus e_t} \rangle$ of $\Delta$. Note that $\Delta' \neq \Delta$, because $\mathcal{C} \neq \varnothing$.
Since $\eb= e_1, e_2, \ldots, e_t$ is a simplicial sequence in $\mathcal{C}_{n,d}$, it follows from Proposition~\ref{linear quotient and shellability} and Lemma~\ref{linear quotients and simplicial order} that $\Delta'$ is shellable with the given order. Since $\Delta$ is extendably shellable, there exist $G_{t+1}, \ldots, G_r \in \mathcal{F} \left( \Delta \right)$, $r={n \choose n-d+1}$, such that $\Delta=\langle {[n] \setminus e_1}, \ldots, {[n] \setminus e_t}, G_{t+1}, \ldots, G_r \rangle$ is shellable with the given order. For $i=t+1, \ldots, r$ let $e_{i} = [n] \setminus G_i$. Then by Proposition~\ref{linear quotient and shellability}, the ideal $\left(\xb_{e_1}, \ldots, \xb_{e_r} \right)$ has linear quotients, with the given order. Using Lemma~\ref{linear quotients and simplicial order}  once again, we conclude that ${e_1}, \ldots, {e_r}$ is a simplicial sequence in $\mathcal{C}_{n,d}$. Note that $\mathcal{C}\setminus e_{t+1}\setminus \cdots\setminus e_r=\mathcal{C}_{n,d}\setminus e_1\setminus \cdots\setminus e_r =\varnothing$, for $r= {n \choose d-1}$. This implies that $\mathcal{C}$ is chordal.

(ii)$\Rightarrow$(i): Let $\Delta'=\langle G_1,\ldots, G_t\rangle$ be a subcomplex of $\langle [n] \rangle^{(n-d)}$ with $\dim G_i=n-d$ for all $i$, and suppose that $\Delta'$ is shellable with the given order of the facets. For each $i$, let $e_i=[n]\setminus G_i$. It follows from Proposition~\ref{linear quotient and shellability} that the ideal $(\xb_{e_1}, \ldots, \xb_{e_t})$ has linear quotients with the given order of the generators, and Lemma~\ref{linear quotients and simplicial order} implies that ${e_1}, \ldots, {e_t}$ is a simplicial sequence in $\mathcal{C}_{n,d}$. By assumption, the clutter  $\mathcal{C} = \left( \mathcal{C}_{n,d} \right)_{\textbf{\rm{\textbf{e}}}}^{t}$ is  chordal. Therefore there exists a simplicial sequence $\mathbf{e}=e_{t+1},\ldots, e_{s}$  of $\mathcal{C}$  such that $\mathcal{C}\setminus e_{t+1}\setminus\cdots\setminus e_s=\varnothing$. It follows that $e_1,\ldots,e_s$ is a simplicial sequence for $\mathcal{C}_{n,d}$ with $\mathcal{C}_{n,d}\setminus e_1\setminus \cdots\setminus e_s=\varnothing$. Let $\{e_{s+1},\ldots, e_r\}=\{e\subseteq [n]:\ |e|=d-1,\ e\neq e_i,\ 1\leq i\leq s\}$. Then for each $s+1\leq i\leq r$, $e_i$ is not a maximal subcircuit of $\mathcal{C}_{n,d}\setminus e_1\setminus \cdots\setminus e_s$ and hence by Remark~\ref{changing the position in lin quo}  ${\bf e'}=e_1,\ldots, e_r$ is a simplicial sequence of $\mathcal{C}_{n,d}$. 
By Lemma~\ref{linear quotients and simplicial order} the ideal $(\xb_{e_1}, \ldots, \xb_{e_r})$ has linear quotients with the given order of the generators.  Proposition~\ref{linear quotient and shellability} implies that $\langle [n]\rangle^{(n-d)}=\langle G_1,\ldots, G_t,[n]\setminus e_{t+1},\ldots, [n]\setminus e_r\rangle$ is  shellable with respect to the given order. This completes the proof.
\end{proof}

Let $G$ be a decomposable graph. Then by Theorem~\ref{stronglinq} the ideal $I(\bar{G})$ has linear quotients. Hence it has linear resolution, and so by Theorem~\ref{Froberg}, $G$ is chordal. Hence we have $\mathfrak{C}'_2 \subseteq \mathfrak{C}_2$ (indeed, by Dirac's celebrated theorem \cite{Dirac}, we have $\mathfrak{C}'_2 =\mathfrak{C}_2$). This fact together with  Proposition~\ref{pure skeleton of quasi-forest}, Corollary~\ref{stable,lex} and some experimental evidence, lead us to the following conjecture:

\begin{conj} \label{decomposables are chordal-conjecture}
$\mathfrak{C}'_d \subset \mathfrak{C}_d$, for all $d$.
\end{conj}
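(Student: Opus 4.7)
The plan is to prove $\mathfrak{C}'_d\subseteq\mathfrak{C}_d$ by structural induction on the recursive construction of decomposable clutters, verifying that each of the three clauses of the definition preserves chordality. The base case $\mathcal{C}_{n,d}\in\mathfrak{C}_d$ is immediate since every $(d-1)$-subset of $[n]$ remains a simplicial maximal subcircuit throughout any sequence of deletions, so any ordering yields a simplicial sequence terminating at $\varnothing$.

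For the clique-sum case, suppose $\mathcal{C}=\mathcal{C}_1\cup\mathcal{C}_2$ with $W=V(\mathcal{C}_1)\cap V(\mathcal{C}_2)$ a clique in both factors, and assume inductively that $\mathcal{C}_1$ and $\mathcal{C}_2$ are chordal. The strategy is to interleave simplicial orders of $\mathcal{C}_2$ and $\mathcal{C}_1$ on $\mathcal{C}$. The key observation is that a simplicial maximal subcircuit $e$ of $\mathcal{C}_2$ that contains a vertex of $V(\mathcal{C}_2)\setminus W$ remains simplicial over $\mathcal{C}$, because no circuit of $\mathcal{C}_1$ can contain such an $e$, so $N_{\mathcal{C}}[e]=N_{\mathcal{C}_2}[e]$, which is a clique in $\mathcal{C}_2\subseteq\mathcal{C}$. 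One would process these ``external'' deletions first; the technical step is to show that chordal orders of $\mathcal{C}_2$ (and later $\mathcal{C}_1$) can be chosen so that all maximal subcircuits touching the outside of $W$ are processed before any $(d-1)$-subset of $W$. After this stage the residual clutter reduces to $\mathcal{C}_1$, whose chordality furnishes the remainder of the order. The delicate point, especially when $|W|\geq d$, is the alignment of the two orders so that circuits inside $W$ (shared by $\mathcal{C}_1$ and $\mathcal{C}_2$) are not prematurely removed.

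The main obstacle is the third clause: a simplicial subclutter of a chordal clutter must be chordal. This is precisely the open question stated in the Remark following Theorem~\ref{simplicial subclutter has linear quotients-cor}. The difficulty is that the simplicial maximal subcircuits witnessing chordality of the ambient clutter $\mathcal{C}$ can be destroyed after passing to $\mathcal{C}\setminus A_1\setminus\cdots\setminus A_t$: an $e_i$ from the original order may cease to be a maximal subcircuit in the subclutter, and the closed neighborhood of a fresh candidate may shrink in an unpredictable way, with no canonical replacement. A promising route is to first prove the complete-clutter special case, namely that every simplicial subclutter of $\mathcal{C}_{n,d}$ is chordal; by Proposition~\ref{Simon implies that complete clutter is chordal} this statement is equivalent to Simon's conjecture, which it would therefore yield as a byproduct. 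I would attempt it by induction on the length $t$ of the simplicial sequence, lifting at each step a $(d-1)$-subset that has not yet been used and verifying that its neighborhood in the residual clutter has been thinned enough by the previous deletions to become a simplicial maximal subcircuit. The clique-sum step could then be bootstrapped using this core result; I expect the simplicial-subclutter clause to be by far the hardest, since any full resolution would already settle Simon's conjecture, and only partial progress in the spirit of the $d\geq n-3$ range treated later in the paper seems realistic without substantially new ideas.
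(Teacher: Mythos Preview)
The statement you are attempting is Conjecture~\ref{decomposables are chordal-conjecture}, and the paper does \emph{not} prove it; it is stated precisely as an open conjecture, offered as a strengthening of Simon's conjecture. There is therefore no proof in the paper to compare your proposal against. Your own write-up is not a proof either, and you essentially acknowledge this: you correctly isolate the third clause (closure under simplicial subclutters) as the obstruction, note that even the special case of simplicial subclutters of $\mathcal{C}_{n,d}$ already implies Simon's conjecture via Proposition~\ref{Simon implies that complete clutter is chordal}, and concede that only partial progress along the lines of Corollary~\ref{partial answer to Simon's conjecture} is realistic. That diagnosis matches the paper's own framing.

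Two small corrections. First, the base case is not quite as trivial as you state: it is not true that ``every $(d-1)$-subset of $[n]$ remains a simplicial maximal subcircuit throughout any sequence of deletions''; chordality of $\mathcal{C}_{n,d}$ is established in \cite[Corollary~3.11]{BYZ}, and the paper cites this rather than arguing it directly. Second, the statement ``every simplicial subclutter of $\mathcal{C}_{n,d}$ is chordal'' is a priori \emph{stronger} than condition~(ii) of Proposition~\ref{Simon implies that complete clutter is chordal}, since simplicial subclutters allow partial deletions $A_i\subsetneq\{F:e_i\subset F\}$ whereas condition~(ii) concerns only full deletions $\mathcal{C}\setminus e_i$; so it implies Simon's conjecture but is not literally equivalent to it. Your clique-sum sketch is plausible but also incomplete: the ``technical step'' of reordering a simplicial order of $\mathcal{C}_2$ so that all subcircuits meeting $V(\mathcal{C}_2)\setminus W$ come first is itself nontrivial and is not supplied in the paper for general chordal $\mathcal{C}_1,\mathcal{C}_2$ (the quasi-forest case in Proposition~\ref{pure skeleton of quasi-forest} appeals to \cite[Lemma~3.10]{BYZ}, which handles only the situation where one factor is complete).
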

Next corollary shows that Conjecture~\ref{decomposables are chordal-conjecture} is, indeed, a generalization of Simon's conjecture.

\begin{cor} \label{Inverse of Simon's conjecture}
If $\mathfrak{C}'_d \subset \mathfrak{C}_d$, then $\langle [n] \rangle^{(n-d)}$ is  extendably shellable.
\end{cor}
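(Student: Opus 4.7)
The plan is to combine Proposition~\ref{Simon implies that complete clutter is chordal} with the machinery for building decomposable clutters. Assuming $\mathfrak{C}'_d\subseteq\mathfrak{C}_d$, it suffices, by the equivalence (i)$\Leftrightarrow$(ii) of that proposition, to verify that for any simplicial sequence $\mathbf{e}=e_1,\ldots,e_t$ in $\mathcal{C}_{n,d}$ the clutter $\mathcal{C}=(\mathcal{C}_{n,d})^{t}_{\mathbf{e}}$ is chordal. I will deduce this by showing $\mathcal{C}$ is decomposable and then invoking the hypothesis.

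The first step is the base case: by clause (i) of the definition of decomposable clutter, $\mathcal{C}_{n,d}\in\mathfrak{C}'_d$. The second, central step is to identify $\mathcal{C}=(\mathcal{C}_{n,d})^{t}_{\mathbf{e}}$ as a simplicial subclutter of $\mathcal{C}_{n,d}$ in the sense of Subsection~\ref{simp sub}. The natural choice is to take, for each $i$, the maximal set
\[
A_i=\{F\in\mathcal{C}_{n,d}\setminus A_1\setminus\cdots\setminus A_{i-1}\colon\; e_i\subset F\},
\]
so that removing $A_i$ from the running clutter is the same as deleting $e_i$. With this choice, $\mathcal{C}_{n,d}\setminus A_1\setminus\cdots\setminus A_{i-1}=(\mathcal{C}_{n,d})^{i-1}_{\mathbf{e}}$, and the hypothesis that $\mathbf{e}$ is a simplicial sequence in $\mathcal{C}_{n,d}$ says exactly that $e_i$ is simplicial over this clutter. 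Thus conditions~(i)–(iii) in the definition of simplicial subclutter are satisfied, except possibly for the strict inclusion $\mathcal{C}\subsetneq\mathcal{C}_{n,d}$; the case $\mathcal{C}=\mathcal{C}_{n,d}$ (i.e.\ no $e_i$ is ever a maximal subcircuit, so all $A_i=\varnothing$) is already handled by the base case above. In all other cases, $\mathcal{C}$ is a proper simplicial subclutter of $\mathcal{C}_{n,d}$.

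Now clause (iii) of the definition of decomposable clutter immediately yields $\mathcal{C}\in\mathfrak{C}'_d$, and the assumed inclusion $\mathfrak{C}'_d\subseteq\mathfrak{C}_d$ gives $\mathcal{C}\in\mathfrak{C}_d$, i.e.\ $\mathcal{C}$ is chordal. Feeding this into Proposition~\ref{Simon implies that complete clutter is chordal}~(ii)$\Rightarrow$(i) completes the proof.

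The only real subtlety is a bookkeeping one, namely matching the two slightly different notations $\mathcal{C}\setminus A_1\setminus\cdots\setminus A_{i-1}$ (used in the definition of simplicial subclutter) with $(\mathcal{C}_{n,d})^{i-1}_{\mathbf{e}}$ (used for simplicial sequences), and handling the trivial case where the sequence deletes no circuit. Both are routine, so I expect no genuine obstacle beyond a careful statement; the substantive content has already been packaged into Proposition~\ref{Simon implies that complete clutter is chordal} and the definition of decomposability.
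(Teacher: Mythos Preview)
Your proof is correct and follows essentially the same route as the paper: both show that $(\mathcal{C}_{n,d})^{t}_{\mathbf{e}}$ is decomposable (as a simplicial subclutter of the complete clutter), apply the hypothesis $\mathfrak{C}'_d\subseteq\mathfrak{C}_d$ to get chordality, and then invoke Proposition~\ref{Simon implies that complete clutter is chordal}. You are simply more explicit than the paper in spelling out the choice of the sets $A_i$ and in isolating the degenerate case $\mathcal{C}=\mathcal{C}_{n,d}$, which the paper's one-line appeal to the definition leaves implicit.
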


\begin{proof}
It follows from Definition~\ref{Definition of decomposable} that $(\mathcal{C}_{n,d})_\textbf{e}^t$ is a decomposable clutter for any choice of a simplicial sequence $\textbf{e} = e_1, \ldots, e_t$. Also, our assumption implies that $(\mathcal{C}_{n,d})_\textbf{e}^t$ is a chordal clutter. In view of Proposition~\ref{Simon implies that complete clutter is chordal}, we get the desired conclusion.
\end{proof}

\begin{cor} \label{partial answer to Simon's conjecture}
\begin{itemize}
\item[\rm (i)] For $d=2,3$ the clutter $(\mathcal{C}_{n,d})_{\textbf{e}}^t$ is chordal  for any simplicial sequence $\textbf{e}=e_1,\ldots, e_t$ of $\mathcal{C}_{n,d}$.
\item[\rm (ii)] For $i \geq n-3$, the simplicial complex $\langle [n] \rangle^{\left( i \right)}$ is extendably shellable. 
\end{itemize}
\end{cor}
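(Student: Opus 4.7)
The plan is to derive (ii) from (i) via Proposition~\ref{Simon implies that complete clutter is chordal}. For $i=n-1$, the simplex $\langle [n]\rangle$ has a unique facet, so any shelling of a subcomplex is trivially already a shelling; for $i\in\{n-2,n-3\}$, I apply Proposition~\ref{Simon implies that complete clutter is chordal} with $d=n-i\in\{2,3\}$ and invoke (i). Thus the task reduces to proving (i).

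I would prove both cases of (i) by strong induction on $|\mathcal{D}|$, where $\mathcal{D}:=(\mathcal{C}_{n,d})_{\textbf{e}}^t$. The base case $\mathcal{D}=\varnothing$ is chordal by definition. For the inductive step it suffices to exhibit a simplicial maximal subcircuit $f$ of $\mathcal{D}$: then $\textbf{e}':=e_1,\ldots,e_t,f$ is again a simplicial sequence, $\mathcal{D}\setminus f=(\mathcal{C}_{n,d})_{\textbf{e}'}^{t+1}$ has strictly smaller size, and the inductive hypothesis yields chordality of $\mathcal{D}\setminus f$, hence of $\mathcal{D}$. For $d=2$ the sequence $\textbf{e}$ deletes vertices from $\mathcal{C}_{n,2}=K_n$, leaving the complete graph on $V:=[n]\setminus\{e_1,\ldots,e_t\}$; if $|V|\geq 2$, any element of $V$ is both simplicial over $\mathcal{D}$ and a maximal subcircuit.

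The substantial case is $d=3$. Let $H$ be the graph on $[n]$ with edge set $\{e_1,\ldots,e_t\}$. Then $\mathcal{D}$ consists exactly of the $3$-subsets of $[n]$ containing no edge of $H$, equivalently the triangles of the complement graph $\bar{H}$. By Lemma~\ref{linear quotients and simplicial order} the ideal $(\xb_{e_1},\ldots,\xb_{e_t})$ has linear quotients, hence a linear resolution, so Fr\"oberg's theorem (Theorem~\ref{Froberg}) implies that $\bar{H}$ is chordal; moreover $\bar{H}$ contains a triangle since $\mathcal{D}\neq\varnothing$. The key step is the following structural lemma: \emph{every chordal graph with a triangle has a maximal clique $K$ with $|K|\geq 3$ and an edge $\{v,w\}\subset K$ contained in no other maximal clique}. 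Granting this, set $f=\{v,w\}$. Since $\{v,w,c\}\in\mathcal{D}$ precisely when $c$ is a common $\bar{H}$-neighbor of $v$ and $w$, and any such $c$ lies in some maximal clique through $\{v,w\}$, uniqueness of $K$ forces $N_{\bar{H}}(v)\cap N_{\bar{H}}(w)=K\setminus\{v,w\}$, so $N_{\mathcal{D}}[f]=K$; this is a clique in $\bar{H}$ and hence in $\mathcal{D}$. Thus $f$ is simplicial over $\mathcal{D}$, and maximality follows from $|K|\geq 3$.

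The main obstacle is proving the structural lemma on chordal graphs. I would argue by induction on the number of vertices of $G$, invoking existence of a simplicial vertex $u$ of $G$: if $\deg u\geq 2$ then $N_G[u]$ is itself a maximal clique of size $\geq 3$, and any edge at $u$ lies in no other maximal clique (since any maximal clique through $u$ is contained in $N_G[u]$); if $\deg u\leq 1$ then $u$ is in no triangle of $G$, so $G-u$ is a smaller chordal graph still containing a triangle, and the clique $K$ and edge $\{v,w\}$ supplied by the inductive hypothesis remain valid in $G$ because removing a vertex of degree $\leq 1$ neither destroys $K$ nor creates a new maximal clique of size $\geq 3$ through it.
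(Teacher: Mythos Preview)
Your argument is correct and follows the same overall architecture as the paper: reduce (ii) to (i) via Proposition~\ref{Simon implies that complete clutter is chordal}, dispose of $d=2$ by observing that one is left with a complete graph, and for $d=3$ use Lemma~\ref{linear quotients and simplicial order} together with Fr\"oberg's theorem to see that $G:=\bar H$ is chordal, then extend the simplicial sequence one edge at a time. The execution of that last step differs. The paper picks a simplicial vertex $v$ of $G$ and any neighbor $w$, and then appeals to Theorem~\ref{simplicial subclutter has linear quotients-cor}(ii) (removing the edge $\{v,w\}$ from $G$ is a simplicial subclutter operation, so $(\xb_{e_1},\ldots,\xb_{e_t},x_vx_w)$ again has linear quotients) together with Lemma~\ref{linear quotients and simplicial order} to conclude that $e_1,\ldots,e_t,\{v,w\}$ is a simplicial sequence; termination comes from the edge count of $G$ strictly decreasing, so some of the appended $e_i$ may fail to be maximal subcircuits and are discarded via Remark~\ref{changing the position in lin quo}. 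You instead isolate a purely graph-theoretic fact---a chordal graph with a triangle contains an edge lying in a unique maximal clique of size at least $3$---and verify directly that such an edge $f$ satisfies $N_{\mathcal D}[f]=K$, so that $f$ is a genuine simplicial \emph{maximal} subcircuit at every stage. Your route is more elementary and self-contained (no appeal to the simplicial-subclutter machinery), and it also sidesteps the situation where the available simplicial vertex of $G$ has degree at most $1$; the paper's route has the advantage of staying entirely within the linear-quotients framework already developed and avoids proving an auxiliary lemma about chordal graphs.
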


\begin{proof}
(i) Let $\mathcal{C}= (\mathcal{C}_{n,d})_{\textbf{e}}^t$. If $\mathcal{C}=\varnothing$ we are done. Suppose $\mathcal{C}\neq\varnothing$.

First suppose $d=2$. 
In this case, $\mathcal{C}_{n,2}$ is the complete graph on the vertex set $[n]$ and a simplicial deletion with respect to a simplicial vertex $v$  results in the complete graph $\mathcal{C}_{n-1, d}$ on the vertex set $[n]\setminus \{v\}$.  Hence   $\mathcal{C}= \mathcal{C}_{n-t, 2}$ is a complete graph on $n-t$ vertices. This graph is clearly a chordal graph. 

Now suppose $d=3$. 
Since $e_1, \ldots, e_t$ is a simplicial sequence in $\mathcal{C}_{n,3}$, it follows from Lemma~\ref{linear quotients and simplicial order} that the ideal $I=\left( \xb_{e_1}, \ldots, \xb_{e_t} \right)$, which is equigenerated in degree $2$, has linear quotients, and hence linear resolution. Let $G$ be the graph with $I \left( \bar{G} \right) = I$. Then $G$ is a chordal graph, see Theorem~\ref{Froberg}. Thus $G$ admits a simplicial order ${\bf e}$. Suppose $v$ is a vertex which is the first element of the sequence ${\bf e}$ and let $w \in \mathrm{N}_G \left[v \right]\setminus\{v\}$. Now let $G'$ be a graph whose edges are all edges of $G$ except $\{v,w\}$. Then  $G'$ is again a chordal graph and since $G'$ is a simplicial subclutter of $G$, it follows from Lemma~\ref{simplicial subclutter has linear quotients-cor} that the ideal $I \left( \bar{G'} \right) = \left( \xb_{e_1}, \ldots, \xb_{e_t}, x_vx_w \right)$ has linear quotients with the given order.  Lemma~\ref{linear quotients and simplicial order}  implies that $e_1, \ldots, e_t, e_{t+1}$ is a simplicial sequence in $\mathcal{C}_{n,3}$, where $e_{t+1} = \{v, w\}$. Continuing this process for $\mathcal{C} \setminus e_{t+1}$,  after some finite steps, we  find a simplicial sequence $e_{t+1}, \ldots, e_r$ in $\mathcal{C}$, such that $\mathcal{C}\setminus e_{t+1}\setminus \cdots\setminus e_r= \varnothing$. Therefore $\mathcal{C}$ is a chordal clutter.

\medskip
(ii) The assertion is clear for $i \geq n-1$. For  $i=n-3, n-2$, the assertion follows from  Proposition~\ref{Simon implies that complete clutter is chordal} and part (i).
\end{proof}


\section{Some classes of decomposable clutters}
The aim of this section is to compare the class of ideals associated to  decomposable clutters with some other known classes of ideals with linear quotients. Indeed, since the ideals associated to decomposable (chordal, resp.) clutters have linear quotients (resolution, resp.), it is reasonable to ask how large this class is. We will see that some classes of ideals with linear quotients come from the class of decomposable clutters.


\subsection{Quasi-forest simplicial complexes}
Let $\Delta$ be a simplicial complex. A facet $F \in  \mathcal{F}\left(\Delta \right)$ is said to be a \textit{leaf} of $\Delta$ if either $F$ is the only facet of $\Delta$, or there exists a facet $G\in \mathcal{F}\left(\Delta \right)$ with $G \neq F$, called a \textit{branch} of $F$, such that $H\cap F\subseteq G\cap F$ for all $H \in \mathcal{F}\left(\Delta \right)$ with $H \neq F$.

A vertex $v$ of $\Delta$ is called a \textit{free vertex }of $\Delta$ if $v$ belongs to exactly one facet. Note that every leaf has at least one free vertex.

A \textit{quasi-forest} is a simplicial complex such that there exists a labeling $F_1, \ldots, F_q$ of the facets of $\Delta$, called a \textit{leaf order}, such that for each $1< i \leq q$ the facet $F_i$ is a leaf of the subcomplex $\langle F_1, \ldots, F_i\rangle$.  

\medskip
It is known that the $1$-skeleton of a quasi-forest is  a chordal graph (essentially Dirac \cite{Dirac}, see \cite[Theorem 3.3]{HHZ}). In the following, we show that  every pure $d$-skeleton of a quasi-forest is a decomposable chordal clutter.

\begin{prop} \label{pure skeleton of quasi-forest}
Let $\Delta$ be a quasi-forest, $d$ a positive integer and let $\mathcal{C} = \mathcal{F} \left( \Delta^{[d]} \right)$. Then $\mathcal{C}$  is a decomposable and chordal clutter.
\end{prop}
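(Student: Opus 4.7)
The plan is to prove decomposability and chordality separately: the first by induction on the number $q$ of facets of $\Delta$, and the second by induction on $n=|V(\Delta)|$. In both cases the key step is to peel off a leaf $F_q$ of $\Delta$, together with its branch $F_j$ and a free vertex $v\in F_q$.

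For decomposability, the base case $q=1$ holds because $\mathcal{C}=\mathcal{C}_{|F_1|,d}$ is decomposable by clause (i). For $q\geq 2$, if $|F_q|<d$ then $F_q$ contributes no $d$-circuit, so $\mathcal{C}=\mathcal{F}((\Delta')^{[d]})$ with $\Delta'=\langle F_1,\ldots,F_{q-1}\rangle$ a quasi-forest with $q-1$ facets, and induction applies. Otherwise, I would write $\mathcal{C}=\mathcal{C}_1\cup\mathcal{C}_2$ with $\mathcal{C}_1=\mathcal{F}((\Delta')^{[d]})$ (decomposable by induction when nonempty; if empty, then $\mathcal{C}=\mathcal{C}_2$ and we are done) and $\mathcal{C}_2=\mathcal{C}_{|F_q|,d}$ on vertex set $F_q$ (decomposable by (i)). The leaf condition $F_q\cap H\subseteq F_q\cap F_j$ for every facet $H\neq F_q$ forces $V(\mathcal{C}_1)\cap V(\mathcal{C}_2)\subseteq F_q\cap F_j$; this set is a clique in $\mathcal{C}_2$ (every subset of $F_q$ is a clique in $\mathcal{C}_{|F_q|,d}$) and in $\mathcal{C}_1$ (a subset of $F_j$ whose $d$-subsets, when they exist, are circuits of $\mathcal{C}_1$; size $<d$ is vacuous). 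A free vertex of $F_q$ lies in $V(\mathcal{C}_2)\setminus V(\mathcal{C}_1)$, and maximality of each $F_i$ not contained in $F_q$ yields a vertex in $V(\mathcal{C}_1)\setminus V(\mathcal{C}_2)$, so clause (ii) gives decomposability.

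For chordality I induct on $n$. The cases $n<d$ and $q=1$ are trivial or follow from the chordality of complete clutters; the case $|F_q|<d$ reduces to $\Delta'$ on strictly fewer vertices, since the free vertex of $F_q$ then disappears from $V(\mathcal{C})$. In the remaining case $q\geq 2$, $|F_q|\geq d$, I would peel off $v$ by a simplicial sequence $e_1,\ldots,e_m$ consisting of all $(d-1)$-subsets $e_i=\{v\}\cup T_i$ of $F_q$, ordered by lexicographic order on the $(d-2)$-subsets $T_i\subseteq F_q\setminus\{v\}$. Because $v$ lies in no facet other than $F_q$, $N_\mathcal{C}[e_i]$ coincides with $N_{\mathcal{C}_2}[e_i]$ throughout the process, and a direct computation gives $N[e_i]=F_q\setminus B_i$, where $B_i=\{T_l\setminus T_i : l<i,\ |T_l\setminus T_i|=1\}$. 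Lex order guarantees the key combinatorial property: for every $l<i$ with $|T_l\setminus T_i|\geq 2$, the minimum element $p$ of $T_l\setminus T_i$ already lies in $B_i$, witnessed by the one-swap $T^{*}=(T_i\setminus\{y^{*}\})\cup\{p\}$ with $y^{*}=\min\{y\in T_i: y>p\}$, which is lex-smaller than $T_i$ and is therefore processed earlier. Hence $T_l\cap B_i\neq\varnothing$, so $e_l\not\subseteq F_q\setminus B_i$, and no $d$-subset of $F_q\setminus B_i$ can contain any previously deleted $e_l$; thus $F_q\setminus B_i$ is a clique of the current clutter. After the full peel-off, the remaining clutter equals $\mathcal{F}(\Delta_1^{[d]})$ with $\Delta_1=\langle F_1,\ldots,F_{q-1},F_q\setminus\{v\}\rangle$, which is again a quasi-forest on $n-1$ vertices ($F_j$ still serves as a branch of $F_q\setminus\{v\}$ whenever the latter remains a facet), so the inductive hypothesis closes the argument.

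The main obstacle will be the simplicial-sequence verification in the peel-off: one must confirm that at every stage of the lex enumeration, cliquehood of $N[e_i]=F_q\setminus B_i$ survives the previous removals. The swap argument described above is precisely the device to handle the awkward case $|T_l\setminus T_i|\geq 2$, and the bulk of the technical write-up would consist in making this bookkeeping rigorous.
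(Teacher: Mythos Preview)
Your decomposability argument is essentially identical to the paper's: both split $\mathcal{C}=\mathcal{C}_1\cup\mathcal{C}_2$ with $\mathcal{C}_1=\mathcal{F}(\langle F_1,\ldots,F_{q-1}\rangle^{[d]})$ and $\mathcal{C}_2$ the complete clutter on $F_q$, use the leaf condition to see that $V(\mathcal{C}_1)\cap V(\mathcal{C}_2)\subseteq F_j$ is a clique, and finish by induction on the number of facets. You are a bit more careful than the paper about the degenerate cases $|F_q|<d$ and $\mathcal{C}_1=\varnothing$ and about the non-containment of vertex sets, which is fine.

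For chordality the routes diverge. The paper stays with induction on the number of facets and simply cites \cite[Lemma~3.10]{BYZ} to obtain a simplicial sequence taking $\mathcal{C}$ down to $\mathcal{C}_1$ in one stroke (removing \emph{all} free vertices of $F_q$ at once). You instead induct on $n$, peel off a single free vertex $v$ via the explicit lex-ordered family $e_i=\{v\}\cup T_i$, and land in $\mathcal{F}(\Delta_1^{[d]})$. Your swap argument (choosing $y^*=\min\{y\in T_i:y>p\}$ and $T^*=(T_i\setminus\{y^*\})\cup\{p\}$) is correct: lex order forces $y^*$ to exist and $T^*<_{\mathrm{lex}}T_i$ with $T^*\setminus T_i=\{p\}$, so $p\in B_i\cap T_l$. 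What you gain is a completely self-contained proof that does not appeal to an external lemma; what the paper gains is brevity. In substance your construction is presumably a one-vertex-at-a-time version of the cited lemma.

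One small point to tidy up in the write-up: once $B_i=F_q\setminus e_i$ (which does occur, e.g.\ when $|F_q|=d$ and $i\geq 2$), the element $e_i$ is no longer a \emph{maximal} subcircuit of the current clutter, so strictly speaking it cannot appear in a simplicial \emph{order}. Since deleting such an $e_i$ does nothing, simply drop these indices from the sequence (cf.\ Remark~\ref{changing the position in lin quo}); the remaining $e_i$'s then form the desired simplicial order down to $\mathcal{F}(\Delta_1^{[d]})$.
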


\begin{proof}
Let $[n]$ be the vertex set of $\Delta$ and $\mathcal{F} \left( \Delta \right) = \left\{ F_1, \ldots, F_r \right\}$, where $F_i$ is a leaf in the simplicial complex $\langle F_1, \ldots, F_i\rangle$ for $i=2, \ldots, r$. 
We use induction on $r$ to show that $\mathcal{C} = \mathcal{F} \left( \Delta^{[d]} \right)$ is a decomposable chordal clutter.

 If $r=1$, then $\mathcal{C}=\mathcal{F}(\langle F_1 \rangle^{[d]})$ consists of all $(d+1)$-subsets of $F_1$ and hence it is a complete $(d+1)$-uniform clutter on the vertex set $F_1$. Therefore it is decomposable by definition.  Moreover by \cite[Corollary 3.11]{BYZ}, $\mathcal{C}$ is  also chordal.  Assume that $r>1$ and the assertion holds for all quasi-forest simplicial complexes with less facets. Let $F$ be the set of free vertices of $F_r$ in $\Delta$, let $F'=F_r \setminus F$ and $\mathcal{D} = \mathcal{C} \lceil_{[n] \setminus F}$. 
 Since $F_r$ is a leaf in $\Delta$, there exits $j<r$, such that $F_i \cap F_r \subseteq F_j \cap F_r$, for all $i=1, \ldots, r-1$. Hence 
\begin{equation*}
F' = \mathop{\bigcup}_{i=1}^{r-1} \left( F_i \cap F_r \right) \subseteq F_j \cap F_r \subseteq F_j.
\end{equation*}
This implies that $F' \in \langle F_1, \ldots, F_{r-1} \rangle$. Set $\mathcal{C}_1 = \mathcal{F} \left( \langle F_1, \ldots, F_{r-1} \rangle^{[d]} \right)$ and $\mathcal{C}_2= \mathcal{F}(\langle F_r \rangle^{[d]})$. Then  $\mathcal{C} = \mathcal{C}_1 \cup \mathcal{C}_2$, and $V \left( \mathcal{C}_1 \right) \cap F_r = F'$ is a clique in $\mathcal{C}_1$ and $\mathcal{C}_2$. 

Since the simplicial complex $\langle F_1, \ldots, F_{r-1} \rangle$ is a quasi-forest, our induction hypothesis implies that $\mathcal{C}_1$ is a decomposable chordal clutter.  Moreover, $\mathcal{C}_2$ is also decomposable and chordal by induction base. Therefore, by definition,   $\mathcal{C}$ is decomposable. 

As in the proof of \cite[Lemma 3.10]{BYZ}, there exists a simplicial sequence $\textbf{e}=e_1, \ldots, e_t$ in $\mathcal{C}$ such that  $\mathcal{C}_\textbf{e}^t = \mathcal{C}_1$. 
It follows that $\mathcal{C}$ is also a chordal clutter.
\end{proof}

\begin{rem} \mbox{}
\begin{itemize}
\item[(a)] Let $\Delta$ be a quasi-forest simplicial complex, $\mathcal{C} = \mathcal{F} \left( \Delta^{[d]} \right)$ and $I= I \left( \bar{\mathcal{C}} \right)$.  It follows from Proposition~\ref{pure skeleton of quasi-forest} and Theorem~\ref{stronglinq} that the ideal $I$ has linear quotients. 
\item[(b)] It is known that every chordal graph is  $1$-skeleton of a quasi-forest (c.f. \cite[Theorem 3.3]{HHZ}). Since chordal graphs are exactly decomposable $2$-uniform clutters, we conclude  that every decomposable graph is  $1$-skeleton of a quasi-forest. This is not the case for arbitrary $d$-uniform decomposable clutters, $d\geq 3$; see Example~\ref{ex 1}. 
\item[(c)] Let $G$ be a decomposable graph and $I= I \left(\bar{G} \right)$. Then $G$ is chordal and it is known that all powers of $I$ have a linear resolution (c.f. \cite[Theorem 3.2]{HHZ2}). One can not expect to have the same statement for arbitrary $d$-uniform decomposable clutters, $d\geq 3$; see Example~\ref{ex 2}. 
\end{itemize}
\end{rem}

\begin{ex} \label{ex 1}
Let $\mathcal{C}$ be the following $3$-uniform clutter (see Figure~\ref{umbrella}):
\begin{equation*}
\mathcal{C} = \left\{ \{1,2,3\}, \{1,3,4\}, \{1,4,5\}, \{1,2,5\} \right\}.
\end{equation*}

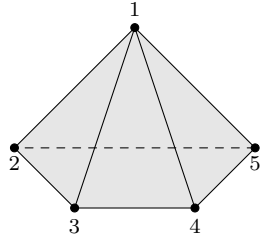
\begin{figure}[H]
\centering
\begin{tikzpicture}[line cap=round,line join=round,>=triangle 45,x=.8cm,y=.8cm]
\clip(0.4,0.5) rectangle (8.6,4.5);
\fill[line width=1.2pt,fill=black,fill opacity=0.10000000149011612] (5.,4.) -- (3.,2.) -- (4.,1.) -- cycle;
\fill[line width=1.2pt,fill=black,fill opacity=0.10000000149011612](5.,4.) -- (6.,1.) -- (4.,1.) -- cycle;
\fill[line width=1.2pt,fill=black,fill opacity=0.10000000149011612](5.,4.) -- (7.,2.) -- (6.,1.) -- cycle;
\draw (4.,1.)-- (6.,1.);
\draw [dash pattern=on 3pt off 3pt]  (7.,2.)-- (3.,2.);
\draw (3.,2.)-- (4.,1.);
\draw (7.,2.)-- (6.,1.);
\draw (5.,4.)-- (3.,2.);
\draw (5.,4.)-- (4.,1.);
\draw (5.,4.)-- (6.,1.);
\draw (5.,4.)-- (7.,2.);
\begin{scriptsize}
\draw (4.0,0.7) node {\begin{scriptsize}$3$\end{scriptsize}};
\draw (6.0,0.7) node {\begin{scriptsize}$4$\end{scriptsize}};
\draw (7.0,1.75) node {\begin{scriptsize}$5$\end{scriptsize}};
\draw (3.0,1.75) node {\begin{scriptsize}$2$\end{scriptsize}};
\draw (5.0,4.3) node {\begin{scriptsize}$1$\end{scriptsize}};
\end{scriptsize}
\draw [fill=black] (5.,4.) circle (1.5pt);
\draw [fill=black] (3.,2.) circle (1.5pt);
\draw [fill=black] (4.,1.) circle (1.5pt);
\draw [fill=black] (7.,2.) circle (1.5pt);
\draw [fill=black] (6.,1.) circle (1.5pt);
\end{tikzpicture}
\caption{The $3$-uniform clutter $\mathcal{C}$}
\label{umbrella}
\end{figure}
Note that $\mathcal{C}$ is a decomposable clutter. To see this, we observe that the clutters
\begin{equation*}
\mathcal{C}_1 = \left\{ \{1,2,3\}, \{1,3,4\}, \{2,3,4\}, \{1,2,4\} \right\} \quad \text{and} \quad \mathcal{C}_2 = \left\{ \{1,2,4\}, \{1,2,5\}, \{1,4,5\}, \{2,4,5\} \right\}
\end{equation*}
are decomposable, for they are complete clutters on $4$ vertices. Since $V \left( \mathcal{C}_1 \right) \cap V \left( \mathcal{C}_2 \right) =\{1,2,4\}$ is a clique in both $\mathcal{C}_1$ and $\mathcal{C}_2$, the clutter $\mathcal{C}_3 = \mathcal{C}_1 \cup \mathcal{C}_2$ is a decomposable clutter. Now, let $e_1 =\{3,4\}, e_2=\{2,4\}, A_1 = \{\{2,3,4\}\}$ and $A_2= \{\{1,2,4\}, \{2,4,5\}\}$. Then $\mathcal{C} = \mathcal{C}_3 \setminus A_1 \setminus A_2$ is a decomposable clutter, by definition.

Now let $\Delta$ be a simplicial complex with $\mathcal{F}(\Delta^{[2]}) = \mathcal{C}$. Then $\dim (\Delta) = 2$ and $\mathcal{C} \subseteq \mathcal{F} \left( \Delta \right)$. 
But $\Delta$ can not be a quasi-forest, because for any order  $F_1, F_2, F_3, F_4$ of the elements in $\mathcal{F}(\Delta)\sect \mathcal{C}$,  the facet $F_4$ does not have a free vertex.
\end{ex}

\begin{ex} \label{ex 2}
Let
\begin{align*}
\mathcal{C} = \{ &\{1,2,3\}, \{1,2,4\}, \{1,2,5\}, \{1,2,6\}, \{1,3,4\}, \{1,3,5\}, \{1,4,6\}, \{1,5,6\},\\
 &\{2,3,5\}, \{2,3,6\}, \{2,4,5\}, \{2,4,6\} \}.
 \end{align*}
Then $\mathcal{C}$ is a decomposable clutter. To see this, we observe that by letting
\[
\begin{array}{llll}
&e_1=\{5,6\}, &\quad & A_1= \{\{2,5,6\}, \{3,5,6\}, \{4,5,6\} \} \\
&e_2=\{3,6\}, &\quad & A_2= \{\{1,3,6\}, \{3,4,6\} \} \\
&e_3=\{3,4\}, &\quad & A_3= \{ \{2,3,4\}, \{3,4,5\} \} \\
&e_4=\{4,5\}, &\quad & A_4= \{\{1,4,5\} \}, \\
\end{array}
\]
we have $e_1\in \mathrm{Simp}( \mathcal{C}_{6,3})$ and for $2\leq i\leq 4$, $e_i\in \mathrm{Simp}( \mathcal{C}_{6,3}\setminus A_1\setminus \ldots\setminus A_{i-1})$. Moreover, $\mathcal{C} = \mathcal{C}_{6,3} \setminus A_1 \setminus A_2 \setminus A_3 \setminus A_4$. So, $\mathcal{C}$ is a decomposable clutter. But the ideal $I \left( \bar{\mathcal{C}}\right)^2$ does not have linear resolution (c.f. \cite[p. 284]{Sturmfels}).
\end{ex}


\subsection{Squarefree stable ideals}
For a monomial $u \in S=\mathbb{K}[x_1,\ldots,x_n]$ we set $m(u) = \max\{i: x_i \mbox{ divides } u\}$, and call a (squarefree) monomial ideal {\em $($squarefree$)$  stable}, if for all (squarefree) monomials $u \in I$, and all $i < m(u)$ (such that $x_i$ does not divide $u$) one has $x_i(u/x_{m(u)})\in I$. It is easy to see that the defining property of a (squarefree)  stable ideal needs to be checked only for the set of monomial generators of the  ideal, \cite[Problem~4.1]{HHBook}. The class of (squarefree) stable ideals have linear quotients \cite[Problem~8.8(b)]{HHBook} and hence linear resolution over all fields. In \cite[Theorem~2.5]{NZ} it is proved that the uniform clutters associated to squarefree stable ideals are chordal. The question comes whether these clutters  are decomposable too. 

A (squarefree)  monomial ideal $I$ is called a {\em $($squarefree$)$ lexsegment ideal} if for all $($squarefree$)$ monomials $u\in I$ and all $($squarefree$)$ monomials $v\in \mathbb{K}[x_1,\ldots,x_n]$ with $\deg v = \deg u $ and $v \geq_{lex} u$ one has $v\in  I$.

Let $I$ be a $($squarefree$)$ monomial ideal in $S$. Then $I$ is called {\em $($squarefree$)$ strongly stable} if one has $x_i(u/x_j)\in I$ for all (squarefree) monomials $u \in I$ and all $i < j$ such that $x_j$ divides $u$ (and $x_i$ does not divide $u$).

It is known that (squarefree) lexsegment ideals and  (squarefree) strongly stable ideals are (squarefree) stable (see \cite[Page~103]{HHBook}). Following this fact,   \cite[Theorem~2.5]{NZ} and \cite[Theorem 3.2]{MA} we get:

\begin{cor}\label{stable,lex}
Let $I$ be an equigenerated squarefree monomial ideal which is either  lexsegmet, or  strongly stable or  stable in $\mathbb{K}[x_1,\ldots,x_n]$ and let   $\mathcal{C}$ be the uniform clutter on the vertex set $[n]$ with $I=I(\bar{\mathcal{C}})$. Then  
\begin{itemize}
\item[\rm (i)] \cite[Theorem~2.5]{NZ} $\mathcal{C}$  is a chordal clutter.
\item[\rm (ii)] \cite[Theorem 3.2]{MA} $\mathcal{C}$  is  a simplicial subclutter of $\mathcal{C}_{n,d}$. In particular, $\mathcal{C}$ is a decomposable clutter. 
\end{itemize}
 \end{cor}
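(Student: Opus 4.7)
The plan is to reduce both assertions to the case of stable ideals, since lexsegment and strongly stable ideals are both stable (this is mentioned in the paragraph preceding the statement, and is standard from \cite[Page~103]{HHBook}). So throughout I would assume $I$ is an equigenerated squarefree stable ideal, and $\mathcal{C}$ is the $d$-uniform clutter on $[n]$ defined by $I = I(\bar{\mathcal{C}})$.

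For part (i), the assertion is exactly \cite[Theorem~2.5]{NZ}, which directly states that when $I$ is squarefree stable, the associated clutter $\mathcal{C}$ is chordal. So part (i) requires only the remark that lexsegment and strongly stable imply stable, followed by invoking this theorem.

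For part (ii), I would first invoke \cite[Theorem~3.2]{MA}, which guarantees that $\mathcal{C}$ is a simplicial subclutter of $\mathcal{C}_{n,d}$. Once this is known, the fact that $\mathcal{C}$ is decomposable is immediate from the definition: item (i) of the definition of decomposable clutter says $\mathcal{C}_{n,d}$ itself is decomposable, and item (iii) says any simplicial subclutter of a decomposable clutter is decomposable. Concatenating these two closure properties applied to the single chain $\mathcal{C} \subset \mathcal{C}_{n,d}$ yields the conclusion.

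The proof is essentially a bookkeeping exercise of quoting three prior results (one about the hierarchy lex/strongly stable $\subseteq$ stable, one from \cite{NZ}, one from \cite{MA}) and combining them with the definition introduced in Section~\ref{decomposable}. There is no genuine obstacle; the only point requiring any care is to make sure the stability hypotheses in \cite{NZ} and \cite{MA} are stated in a form compatible with the squarefree equigenerated setting used here, which they are. No new combinatorial or homological input is needed beyond what is already cited.
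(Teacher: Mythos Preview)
Your proposal is correct and matches the paper's treatment exactly: the paper gives no separate proof but simply states the corollary as an immediate consequence of the fact that lexsegment and strongly stable ideals are stable, together with the cited results \cite[Theorem~2.5]{NZ} and \cite[Theorem~3.2]{MA}, with the ``in particular'' clause following from the definition of decomposable clutter.
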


The following example shows that not all simplicial subclutters of a complete clutter end in squarefree lexsegmet,  squarefree strongly stable or squarefree stable ideals. They do not always lead to matroidal ideals too. Recall that an equigenerated squarefree monomial ideal $I$ is called matroidal if for each pair $u,v$ in the minimal generating set of $I$, whenever $x_i|u$ and $x_i\not| v$, then there exists $j$ with $x_j|v$ and $x_j\not| u$ such that $x_j(u/x_i)\in I$.  
\begin{ex}
Let $I=I(\bar{\mathcal{C}})$, where $\mathcal{C}$ is the clutter in Example~\ref{ex 2}. Since $I^2$ does not have linear resolution, it follows that $I$ is not matroidal because all powers of a matroidal ideal have linear resolution over all fields \cite[Corollary~12.6.4]{HHBook}. Moreover, $I$ is not stable because for $u=x_3x_5x_6\in I$, $m(u)=6$, and we have $x_1(u/x_6)=x_1x_3x_5\notin I$. Since all lexsegment ideals and all strongly stable ideals are stable, we conclude that $I$ is neither lexsegment nor strongly stable.
\end{ex}


\end{document}